\theoremstyle{plain}
\newtheorem{thm}{Theorem}
\newtheorem{lem}[thm]{Lemma}
\newtheorem{prop}[thm]{Proposition}
\theoremstyle{definition}
\newtheorem{defn}[thm]{Definition}
\newtheorem{rmks}[thm]{Remarks}
\numberwithin{thm}{section} \numberwithin{equation}{section}
\newcommand{\ga}[2]{\begin{gather}\label{#1}#2 \end{gather}}
\newcommand{\sD}{{\mathcal D}}
\newcommand{\sE}{{\mathcal E}}
\newcommand{\sF}{{\mathcal F}}
\newcommand{\sL}{{\mathcal L}}
\newcommand{\sN}{{\mathcal N}}
\newcommand{\sO}{{\mathcal O}}
\newcommand{\sU}{{\mathcal U}}
\newcommand{\sX}{{\mathcal X}}
\newcommand{\sZ}{{\mathcal Z}}
\newcommand{\F}{{\mathbb F}}
\begin{document}

\title{Stratified bundles and Representation spaces}

\author{Xiaotao Sun}
\address{Center of Applied Mathematics, School of Mathematics, Tianjin University, No.92 Weijin Road, Tianjin 300072, P. R. China}
\email{xiaotaosun@tju.edu.cn}

\date{July 6, 2017}
\thanks{}
\begin{abstract} For a given stratified bundle $E$ on $X$, we construct an irreducible closed subvariety $\sN(E)_S$ of the so called representation space
$R(\sO_{X_S},\xi_S,P)\to S$ such that $\sN(E)_S(\overline{\mathbb{F}}_q)$ contains a dense set of $(V,\beta)$ where $V$ is induced by a representation of $\pi_1^{{\rm \acute{e}t}}(X)$ (Theorem \ref{thm3.7}). As an application, we give a simply proof of the main theorem of \cite{EM} and its relative version (Theorem \ref{thm4.2}).
\end{abstract}
%\subjclass{Primary Algebraic Geometry}
\maketitle

\section{Introduction}

Let $X$ be a smooth, connected projective variety over an algebraically closed field $k$ of characteristic $p>0$, $\sD_X$ be the sheaf of differential
operators (in the sense of Grothendieck) and $\pi_1=\pi_1^{\text{{\'e}t}}(X, \xi)$ be the {\'e}tale fundamental group of $X$. For any representation $\rho:\pi_1\to {\rm GL}(V)$, one can associate to $\rho$ a $\sD_X$-module $V_{\rho}$. Thus D. Gieseker proved the following results (see Theorem 1.10 of \cite{Gi}): (i) if every $\sD_X$-module on $X$ is trivial, then $\pi_1$ is trivial; (ii) if all irreducible $\sD_X$-modules are rank $1$, then $[\pi_1,\pi_1]$ is a pro-$p$-group;
(iii) if every $\sD_X$-module is a direct sum of rank $1$ $\sD_X$-modules, then $\pi_1$ is abelian with no $p$-power order quotient. Following D. Gieseker,
a $\sD_X$-module $E$ will be called a stratified bundle.

Gieseker also made the conjecture that the converses of above statements might be true. The converse of statement (i) was proved in \cite{EM}, and converses of the statements (ii) and (iii) were proved in \cite{ES}.
The key in these proofs is to produce a representation of $\pi_1=\pi_1^{\text{{\'e}t}}(X, \xi)$ from a given stratified bundle $E$. An equivalent characterization of stratified bundle is that $E=(E_i)_{i\in\mathbb{N}}$ with $E_i=F_X^*E_{i+1}$ ($\forall\,\,i\in\mathbb{N}$) where $F_X:X\to X$ is the Frobenius map, and there is an integer $n_0$ such that
$E_i$ ($i\ge n_0$) are $p$-semistable bundles with trivial chern classes.

If $\Sigma=\{E_i\}_{i\ge n_0}$ is finite, then there is an $F$-periodic bundle $E_{i_0}$ (i.e. there is an integer $N$ such that $(F_X^*)^NE_{i_0}=E_{i_0}$) which induces a representation of $\pi_1$ by a theorem of Lange-Stuhler (Lemma \ref{lem3.4}).

When $\Sigma=\{E_i\}_{i\ge n_0}$ is an infinite set, a theorem of Hrushovski is used to get an $F$-periodic bundle on a good reduction $X_{\bar s}/\overline{\mathbb{F}}_q$ of $X$,
which says that a dominant rational map $f: Y\dashrightarrow Y$ of varieties over $\overline{\mathbb{F}}_q$ has a dense set of $f$-periodic points (Lemma \ref{lem3.6}). If we have a moduli space $M$ parametrizing \textbf{isomorphism classes} of semistable bundles, we would have a subvariety $\sN(E)\subset M$ (by taking Zariski closure of $\Sigma=\{E_i\}_{i\ge n_0}$) such that Frobenius pullback $F_X^*$ induces a dominant rational map $F_X^*:\sN(E)\dashrightarrow \sN(E)$. Then, if $k=\overline{\mathbb{F}}_q$, we
find a dense set of $F$-periodic bundles (thus a dense set of representations of $\pi_1$) by Hrushovski's theorem. Unfortunately, we have only a moduli space $M$ parametrizing
\textbf{$s$-equivalence classes} of semistable bundles. Thus the approach in \cite{EM} consists of two steps: (1) prove the theorem for irreducible stratified bundles (in this case, $\Sigma=\{E_i\}_{i\ge n_0}$ consists of stable bundles), (2) studying the extensions of irreducible stratified bundles, which sometimes is rather involved.

Let $X$ be a projective variety over a perfect field $k$ with a point $$\xi:{\rm Spec}(k)\to X.$$
We observe in this article that for any stratified bundle $E=(E_i)_{i\in \mathbb{N}}$ of rank $r$ there is a natural way to choose frames $\beta_i:\xi^*E_i\cong \sO_X^{\oplus r}$ such that
$(E_i,\beta_i)=F^*_X(E_{i+1},\beta_{i+1})$
(see Lemma \ref{lem3.3}). Moreover, the set $R(E)_{n_0}=\{\alpha_i=(E_i,\beta_i)\}_{i\ge n_0}$ is a set of $k$-points of a moduli space $R(\sO_X,\xi, P)$, which parametrizes
\textbf{isomorphism classes} of $(V,\beta)$ (i.e. semistable bundles $V$ with frames $\beta$ at $\xi\in X$) and was called the \textbf{Representation Space} by Simpson.

In Section 2 of this article, we generalize Simpson's construction of representation spaces $R(\sO_X,\xi,P)$ to the case of characteristic $p>0$ (see Theorem \ref{thm2.3})
and prove that Frobenius pullback $F^*_X$ induces a rational map $f:R(\sO_X,\xi,P)\dashrightarrow R(\sO_X,\xi,P)$ (see Proposition \ref{prop2.5}). In Section 3, for a
stratified bundle $E=(E_i)_{i\in\mathbf{N}}$ such that $\Sigma=\{E_i\}_{i\ge n_0}$ is an infinite set, we construct a closed subvariety $\sN(E)\subset R(\sO_X,\xi,P)$
such that $f:R(\sO_X,\xi,P)\dashrightarrow R(\sO_X,\xi,P)$ induces a dominant rational map $f^a:\sN(E)\dashrightarrow \sN(E)$ and $\sN(E)(k)\cap R(E)_{n_0}$ is an infinite
set (see Theorem \ref{thm3.7}). In Section 4, we use the construction of Section 3 to give a uniform proof (see Theorem \ref{thm4.1}) of the main theorem in \cite{EM}, which says that there is no nontrivial stratified bundle on $X$ if $\pi_1=\pi_1^{\text{{\'e}t}}(X, \xi)$ is trivial. For example, when $k=\overline{\mathbb{F}}_q$, $\sN(E)$ contains
a dense set of points $(V,\beta)$ such that $V$ is induced by a representation of $\pi_1$. On the other hand, if $E=(E_i)_{i\in \mathbf{N}}$ is nontrivial, we can assume that
all bundles $E_i$ in $\Sigma=\{E_i\}_{i\ge n_0}$ are nontrivial, then the set $$\sU=\{\,(V,\beta)\in \sN(E)\,|\,\text{$V$ is nontrivial}\}$$ is a nonempty open set, which must
contain a point $(V,\beta)$ such that $V$ is induced by a representation of $\pi_1$ and we get a contradiction if $\pi_1$ is trivial. These arguments are easily applied to prove
relative version of this theorem (see Theorem \ref{thm4.2}).\\[.2cm]
{\it Acknowledegements:} Theorem \ref{thm4.2} (see \cite{ESr} for an another proof) was a question that H{\'e}l{\`e}ne Esnault posed to me
when I visited Berlin on 2013, where I proved immediately the irreducible case of Theorem \ref{thm4.2} in a unpublished note (in fact, I proved the theorem
for stratified bundles which are extensions of two irreducible stratified bundles). I thank her very much for the question and discussions.

\section{Representation spaces and Frobenius map}

Let $X$ be a nonsingular projective variety over a perfect field $k$, fix an ample line bundle $\sO_X(1)$ on $X$. For a torsion free sheaf $\sE$ of rank $r(\sE)$ on $X$,
$P(\sE, m)=\chi(\sE(m))$ is a polynomial in $m$ (the so called Hilbert polynomial of $\sE$) with degree $n={\rm dim}\,X$.

A torsion free sheaf $\sE$ on $X$ is called $p$-semistable (resp. $p$-stable) if for any subsheaf $\sF\subset \sE$, when $m$ large enough, we have
$$p(\sF,m):=\frac{P(\sF,m)}{r(\sF)}\le \frac{P(\sE,m)}{r(\sE)}:=p(\sE,m)\quad ({\rm resp.}\,\,<\,).$$

\begin{lem}\label{lem2.1} Let $0\to \sE_1\to\sE\to\sE_2\to 0$ be an exact sequence of torsion free sheaves,
if $\sE_1$ and $\sE_2$ are $p$-semistable with $p(\sE_1,m)=p(\sE_2,m)$, then $\sE$ is $p$-semistable with $p(\sE,m)=p(\sE_1,m)=p(\sE_2,m)$.
\end{lem}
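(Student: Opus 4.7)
The plan is to use additivity of the Hilbert polynomial and reduce the semistability of $\sE$ to that of $\sE_1$ and $\sE_2$ via the standard sub-and-quotient trick.

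First I would handle the equality of reduced Hilbert polynomials. Since Hilbert polynomial is additive on short exact sequences, we have $P(\sE,m)=P(\sE_1,m)+P(\sE_2,m)$, so setting $r_i=r(\sE_i)$ and $r=r_1+r_2=r(\sE)$,
\[
p(\sE,m)=\frac{r_1\,p(\sE_1,m)+r_2\,p(\sE_2,m)}{r_1+r_2}.
\]
Since by hypothesis $p(\sE_1,m)=p(\sE_2,m)$, the weighted average collapses to the common value, giving $p(\sE,m)=p(\sE_1,m)=p(\sE_2,m)$.

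Next I would prove the semistability inequality. Let $\sF\subset\sE$ be any nonzero subsheaf, and set
\[
\sF_1=\sF\cap\sE_1,\qquad \sF_2=\sF/\sF_1,
\]
so that $0\to\sF_1\to\sF\to\sF_2\to 0$ is exact. Since $\sF_1\subset\sE_1$ and the natural map $\sF_2\inj\sE/\sE_1=\sE_2$ is injective, both $\sF_i$ are subsheaves of $\sE_i$; in particular $\sF_2$ is torsion free because $\sE_2$ is. By $p$-semistability of $\sE_i$, for $m$ sufficiently large we have $p(\sF_i,m)\le p(\sE_i,m)=p(\sE,m)$ whenever $\sF_i\neq 0$. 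Using additivity again,
\[
p(\sF,m)=\frac{r(\sF_1)\,p(\sF_1,m)+r(\sF_2)\,p(\sF_2,m)}{r(\sF_1)+r(\sF_2)}
\]
is a convex combination of quantities each bounded above by $p(\sE,m)$ for large $m$, so $p(\sF,m)\le p(\sE,m)$ as desired. The degenerate cases $\sF_1=0$ (then $\sF\subset\sE_2$) and $\sF_2=0$ (then $\sF\subset\sE_1$) follow directly from semistability of $\sE_2$ or $\sE_1$, combined with step one.

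There is no real obstacle here beyond bookkeeping: the only subtlety is making sure that $\sF_2$ is torsion free so that the semistability hypothesis on $\sE_2$ can be applied to it, and this is automatic from the inclusion $\sF_2\inj\sE_2$. The role of the hypothesis $p(\sE_1,m)=p(\sE_2,m)$ is precisely to make the weighted average of the two upper bounds collapse to a single common upper bound $p(\sE,m)$, which is what makes the conclusion genuinely semistability of $\sE$ (rather than some intermediate inequality).
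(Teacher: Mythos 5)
Your proposal is correct and follows essentially the same argument as the paper: the same decomposition $0\to\sF_1\to\sF\to\sF_2\to 0$ with $\sF_1=\sF\cap\sE_1$ and $\sF_2$ mapping into $\sE_2$, the same treatment of the degenerate cases, and the same convex-combination estimate using $p(\sE_1,m)=p(\sE_2,m)$. No further comment is needed.
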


\begin{proof} It is easy to check that $p(\sE,m)=p(\sE_1,m)=p(\sE_2,m)$. For a subsheaf $\sF\subset \sE$, let $\sF_2\subset\sE_2$ be the
image of $\sF$ under the surjection $\sE\to\sE_2\to 0$ and $\sF_1\subset \sE_1$ be the subsheaf such that
$$0\to \sF_1\to\sF\to\sF_2\to 0$$ is exact. If $\sF_1=0$ (resp. $\sF_2=0$), we have $$p(\sF,m)=p(\sF_2,m)\le p(\sE_2,m)=p(\sE,m)$$
(resp. $p(\sF,m)=p(\sF_1,m)\le p(\sE_1,m)=p(\sE,m)$). Thus we assume that both $\sF_1$ and $\sF_2$ are nontrivial. Then we have
$$\aligned p(\sF,m)=&\frac{r(\sF_1)p(\sF_1,m)+r(\sF_2)p(\sF_2,m)}{r(\sF)}\\&\le\frac{r(\sF_1)p(\sE_1,m)+r(\sF_2)p(\sE_2,m)}{r(\sF)}=p(\sE,m).\endaligned$$

\end{proof}

Let $S$ be an affine variety over a finite field $\mathbb{F}_q$, and $X_S\to S$ be a projective, flat morphism
with geometrically irreducible fibers. Fix a polynomial $P$ of degree equal to the relative
dimension $d=dim(X_S/S)$ and a relative ample line bundle $\sO_{X_S}(1)$ on $X_S$. Let
$${\rm Quot}_P(\sO_{X_S}(-N)^{\oplus P(N)})\to S $$
be the relative quotient scheme
parametrizing quotients $$\sO_{X_s}(-N)^{\oplus P(N)}\to\sF_s\to 0$$ on geometric fibers $X_s$ of $X_S\to S$ with Hilbert polynomial $P$. It is known that there
is an ample line bundle $\sL_m$ on ${\rm Quot}_P(\sO_{X_S}(-N)^{\oplus P(N)})$ such that the open set $Q\subset {\rm Quot}_P(\sO_{X_S}(-N)^{\oplus P(N)})$ of GIT semistable (resp. GIT stable) points
under the action of ${\rm GL}(P(N))$ (respect to $\sL_m$) is precisely the open set of quotients $\sO_{X_s}(-N)^{\oplus P(N)}\to\sF_s\to 0$ where $\sF_s$ are $p$-semistable (resp. $p$-stable) torsion free sheaves on $X_s$ (See \cite{Si} or \cite{La} for positive characteristic). Let
\ga{2.1} {\varphi: Q\to M(\sO_{X_S},P):=Q//{\rm GL}(P(N))}
be the GIT quotient over $S$ defined in Theorem 4 of \cite{Se}. Then $$M(\sO_{X_S},P)\to S$$ is a projective scheme of finite type over $S$, which uniformly corepresents the functor
$\mathbf{M}(\sO_{X_S},P): {\rm Sch}/S\to {\rm Sets}$ defined by
$$\mathbf{M}(\sO_{X_S},P)(S')=\left\{\aligned&\text{$s$-equivalence classes of families of $p$-semistable}\\& \text{ sheaves on the geometric fibres of $X_{S'}\to S\,'$,}
\\& \text{which are flat over $S'$ with Hilbert polynomial $P$}
\endaligned \right\}.$$

\begin{defn}\label{defn2.2} Suppose $\xi:S\to X_S$ is a section of $X_S\to S$ and $\sF$ is a $p$-semistable sheaf with Hilbert polynomial $P$. Then $\sF$ is called
satisfying condition ${\rm LF}(\xi)$ if $gr(\sF_s)$ is loaclly free at $\xi(s)$ ($\forall\,\,s\in S$).
\end{defn}

Let $Q^{{\rm LF}(\xi)}\subset Q $ be the subset of $Q$ parametrizing quotients $$\sO_{X_S}(-N)^{\oplus P(N)}\to\sF\to 0$$ where $\sF$ satisfies condition ${\rm LF}(\xi)$. It was shown in \cite{Si} that
there is an open set $M^{{\rm LF}(\xi)}(\sO_{X_S},P)\subset M(\sO_{X_S},P)$ such that $$Q^{{\rm LF}(\xi)}=\varphi^{-1}(M^{{\rm LF}(\xi)}(\sO_{X_S},P))$$ and
$\varphi: Q^{{\rm LF}(\xi)}\to M^{{\rm LF}(\xi)}(\sO_{X_S},P)$ is an uniform categorical quotient.

Let $\sF^{univ}$ be the universal quotient on $X_S\times_SQ^{{\rm LF}(\xi)}$, which is locally free along the universal section
$\xi:Q^{{\rm LF}(\xi)}\to X_S\times_S Q^{{\rm LF}(\xi)}$, and let $$\pi: T\to Q^{{\rm LF}(\xi)}$$ be the frame bundle of $\xi^*(\sF^{univ})$, which
represents the functor that associates to any $S'\to S$ the set of all triples $(\sE, \alpha, \beta)$, where $\sE$ is a $p$-semistable torsion free sheaf of
Hilbert polynomial $P$ on $X_{S'}/S'$ satisfying condition ${\rm LF}(\xi)$, and $\alpha$, $\beta$ are isomorphisms
$$\alpha: \sO_{S'}^{\oplus P(N)}\cong H^0(X_{S'}/S',\sE(N)), \quad \beta:\xi^*(\sE)\cong \sO_{S'}^{\oplus r}.$$
The group ${\rm GL}(P(N))\times {\rm GL}(r)$ acts on $T$, compatibly with the action of ${\rm GL}(P(N))$ on $ Q^{{\rm LF}(\xi)}$. We may choose a linearization of the action of
${\rm GL}(P(N))$ on $\sL_m^b$ such that the center $G_m\subset {\rm GL}(P(N))$ acts trivially. Then the line bundle $\sL_m^b$ on $Q^{{\rm LF}(\xi)}$ has a linearization with respect
to the group ${\rm GL}(P(N))\times {\rm GL}(r)$, where the second factor acts trivially. Let $\textbf{L}$ denote the pullback of the ${\rm GL}(P(N))\times {\rm GL}(r)$-linearized bundle $\sL_m^b$ to $T$. Then we have a characteristic $p$ analogue of a special case of Simpson's result (see Theorem 4.10 of \cite{Si}).

\begin{thm}\label{thm2.3} Every point of $\,T$ is stable for the action of ${\rm GL}(P(N))$ with respect to the linearized line bundle $\textbf{L}$, and the action of ${\rm GL}(P(N))$
on $T$ is free. There exists a uniform geometric quotient
$$\phi: T\to R(\sO_{X_S},\xi, P):=T//{\rm GL}(P(N))$$
uniformly corepresents the functor which associates to any $S'\to S$ the set of pairs $(\sE,\beta)$ where $\sE$ is a $p$-semistable torsion free sheaf of
Hilbert polynomial $P$ on $X_{S'}/S'$ satisfying condition ${\rm LF}(\xi)$, and $$\beta:\xi^*(\sE)\cong \sO_{S'}^{\oplus r}$$ is a frame. Moreover, we have the following properties:
\begin{itemize}
\item [(1)] Every point of $R(\sO_{X_S},\xi, P)$ is GIT semistable under the action of ${\rm GL}(r)$ (respect to a $\sL$ obtained from $\textbf{L}$) and the quotient $R(\sO_{X_S},\xi, P)//{\rm GL}(r)$ is naturally equal to
$M^{{\rm LF}(\xi)}(\sO_{X_S},P)$;

\item [(2)] For a geometric point $\alpha=(V,\beta)\in  R(\sO_{X_S},\xi, P)$, the orbit $O(\alpha)$ of $\alpha=(V,\beta)$ under ${\rm GL}(r)$ is closed if and only if $V$ is
a direct sum of $p$-stable sheaves, and $\alpha=(V,\beta)$ is a properly stable point if and only if $V$ is a $p$-stable sheaf.

\end{itemize}
\end{thm}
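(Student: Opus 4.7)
The plan is to follow the strategy of Simpson's proof of Theorem 4.10 in \cite{Si}, adapting the GIT inputs to positive characteristic using Langer's construction of moduli of $p$-semistable sheaves (\cite{La}) and Seshadri's general theorem on uniform geometric quotients by free actions of reductive groups (\cite{Se}).

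The first step is to show that the ${\rm GL}(P(N))$-action on $T$ is free. A direct computation using the universal property of the frame bundle identifies the stabilizer of $(\sE,\alpha,\beta)\in T$ in ${\rm GL}(P(N))$ with the kernel of the evaluation homomorphism ${\rm Aut}(\sE)\to {\rm GL}(\xi^*\sE)={\rm GL}(r)$, $f\mapsto \xi^*f$. I would then argue that this kernel is trivial: any endomorphism of a $p$-semistable torsion-free sheaf $\sE$ satisfying LF$(\xi)$ which restricts to zero on $\xi^*\sE$ must itself be zero. This is shown by induction on the length of the Jordan--H\"older filtration of $\sE$, using that the LF$(\xi)$ condition makes the filtration restrict cleanly to $\xi^*\sE$ and that stable sheaves have only scalar endomorphisms (so their restrictions to $\xi$ are detected faithfully). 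Once freeness is in hand, GIT stability with respect to $\textbf{L}$ is automatic: the pull-back linearization $\textbf{L}$ inherits from $\sL_m$ the property that every point of $Q^{{\rm LF}(\xi)}$ is semistable, and a free action with ample equivariant line bundle makes every point stable by Mumford's numerical criterion.

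With freeness and stability secured, the existence of the uniform geometric quotient $\phi:T\to R(\sO_{X_S},\xi,P)$ is given by Seshadri's theorem (Theorem 4 of \cite{Se}) applied over the affine base $S$. Corepresentation of the functor of pairs $(\sE,\beta)$ reduces to the universal property of $T$ together with the observation that modding out by ${\rm GL}(P(N))$ exactly erases the auxiliary frame $\alpha$, since the ${\rm GL}(P(N))$-action transitively permutes the frames $\alpha$ at a fixed $\sE$ while the frame $\beta$ transforms equivariantly.

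For (1), consider the iterated quotient $T//({\rm GL}(P(N))\times {\rm GL}(r))$. Computed by first dividing by ${\rm GL}(P(N))$ and then by ${\rm GL}(r)$, it equals $R(\sO_{X_S},\xi,P)//{\rm GL}(r)$; computed in the other order (first killing the ${\rm GL}(r)$-action on $\beta$), one recovers $Q^{{\rm LF}(\xi)}//{\rm GL}(P(N))=M^{{\rm LF}(\xi)}(\sO_{X_S},P)$. Semistability of every point under ${\rm GL}(r)$ follows because the descended line bundle $\sL$ still separates ${\rm GL}(r)$-orbits in the same way that $\sL_m$ separates ${\rm GL}(P(N))$-orbits on $Q$. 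For (2), the stabilizer of $(V,\beta)$ in ${\rm GL}(r)$ equals the image of ${\rm Aut}(V)\hookrightarrow {\rm GL}(r)$ under restriction at $\xi$, which by the injectivity argument above is isomorphic to ${\rm Aut}(V)$. Thus proper stability (modulo the central $G_m\subset {\rm GL}(r)$) is equivalent to ${\rm Aut}(V)=k^*$, i.e. $V$ is $p$-stable; closedness of the orbit corresponds to the standard GIT degeneration $V\rightsquigarrow \gr(V)$ being trivial, equivalently $V$ is a direct sum of $p$-stable sheaves.

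The main obstacle is the injectivity of ${\rm Aut}(\sE)\hookrightarrow {\rm GL}(r)$ via restriction at $\xi$ for $p$-semistable $\sE$ in characteristic $p>0$; here one must be careful that the Jordan--H\"older induction works exactly as in characteristic zero, using Langer's formalism of $p$-semistability (Lemma~\ref{lem2.1}) and the LF$(\xi)$ condition on graded pieces. A secondary technical point is ensuring that Seshadri's uniform geometric quotient applies in the relative setting over $S$ with the chosen linearization, which reduces to checking compatibility of Langer's GIT construction with base change along $S\to \Spec\,\mathbb{F}_q$.
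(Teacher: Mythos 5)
Your proposal follows essentially the same route as the paper: both are adaptations of Simpson's proof of Theorem 4.10 of \cite{Si}, with the key input being the injectivity of ${\rm Aut}(\sE)\to {\rm GL}(\xi^*\sE)$ for $p$-semistable $\sE$ satisfying ${\rm LF}(\xi)$ (this is exactly Simpson's Lemma 4.9, which you propose to reprove by Jordan--H\"older induction and which the paper simply cites, noting it is characteristic-free). Your treatment of (1) via the double quotient computed in the two orders, and of (2) via identifying the ${\rm GL}(r)$-stabilizer of $(V,\beta)$ with ${\rm Aut}(V)$, matches the paper's argument in substance; the paper makes the transfer of orbit-closedness from $Q^{{\rm LF}(\xi)}$ to $R(\sO_{X_S},\xi,P)$ explicit through the identity $\phi^{-1}(O_{{\rm GL}(r)}(\alpha))=\pi^{-1}(O_{{\rm GL}(P(N))}(q'))$ coming from the two principal-bundle structures on $T$, whereas you assert the correspondence with the degeneration $V\mapsto \gr(V)$ without justifying the transfer.

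Two steps in your plan are not quite right as stated. First, for freeness: showing that every geometric point of $T$ has trivial stabilizer gives only set-theoretic freeness, while what is needed (to invoke Mumford's Proposition 0.9 and conclude that $\phi$ is a principal ${\rm GL}(P(N))$-bundle, hence a geometric quotient) is that ${\rm GL}(P(N))\times T\to T\times_{R}T$, $(g,q)\mapsto (g(q),q)$, is a closed immersion. The paper obtains this by combining properness of this map (Corollary 2.5 on p.~55 of \cite{Mu}) with the fact that it is an inclusion of functors (again via Lemma 4.9 of \cite{Si}); a proper monomorphism is a closed immersion. Your proposal omits this upgrade entirely. Second, your claim that stability of every point of $T$ is ``automatic by Mumford's numerical criterion'' once freeness and semistability are known misattributes the reason: the actual argument is that finite stabilizers force all orbits to have the same dimension, so no orbit can lie in the closure of another, hence every orbit is closed in the (entire, semistable) locus $T$, which together with finiteness of stabilizers is the definition of stability. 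These are repairable technical gaps rather than a wrong approach, but they are precisely the points where the paper's proof does work beyond quoting Simpson.
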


\begin{proof} The proof is the same with Simpson's proof in characteristic zero. For conveniences of readers, we repeat his proof here, which works in characteristic $p>0$.

The projection $\pi: T\to  Q^{{\rm LF}(\xi)}$ is an affine map and all points of  $Q^{{\rm LF}(\xi)}$ are semistable for the action of ${\rm GL}(P(N))$ respect to linearized line
bundle $\sL_m^b$. Thus if $q\in T$ is any point, then there is an ${\rm GL}(P(N))$-invariant section $\sigma\in {\rm H}^0(Q^{{\rm LF}(\xi)}, \sL_m^{ab})$ such that
$(Q^{{\rm LF}(\xi)})_{\sigma\neq 0}$ is affine and $\sigma(\pi(q))\neq 0$. Then $\pi^*(\sigma)\in {\rm H}^0(T, \mathbf{L}^a)$ is ${\rm GL}(P(N))$-invariant such that
$\pi^*(\sigma)(q)\neq 0$ and $T_{\pi^*(\sigma)\neq 0}=\pi^{-1}((Q^{{\rm LF}(\xi)})_{\sigma\neq 0})$ is affine. Thus any point $q\in T$ is semistable. To prove that every point
of $T$ is stable, the key is a lemma of Simpson (Lemma 4.9 of \cite{Si}), which implies that the stabilizer of any point of $T$ is finite and in particular orbits of all points
of $T$ have same dimension. Thus the orbit of any point of $T$ is closed since no orbit can be contained in the closure of another orbit.

To show the action of ${\rm GL}(P(N))$ on $T$ is free, we must show that
\ga{2.2}{{\rm GL}(P(N))\times T\to T\times_{R(\sO_{X_S},\xi, P)}T,\quad (g,q)\mapsto (g(q),q)}
is a closed immersion. By a general result of Mumford (Corollary 2.5 of \cite{Mu} at page 55), the above morphism \eqref{2.2} is proper. By using again Lemma 4.9 of \cite{Si}, Simpson was
able to show that \eqref{2.2} is an inclusion of functors. A proper map which is an inclusion of functors is a closed immersion. Thus the action of ${\rm GL}(P(N))$ on $T$ is free, which implies that $\phi: T\to R(\sO_{X_S},\xi, P)$ is a principal ${\rm GL}(P(N))$-bundle over $R(\sO_{X_S},\xi, P)$ by Proposition 0.9 of \cite{Mu}.

To prove (1), let $\sL$ be an ample line bundle on $R(\sO_{X_S},\xi, P)$ such that $\phi^*(\sL)=\textbf{L}=\pi^*\sL_m^b$, we prove that for any point
$q\in T$ the point $\phi(q)\in R(\sO_{X_S},\xi, P)$ is GIT semistable under the action of ${\rm GL}(r)$ (respect to a $\sL$). As above, there is an
$\sigma\in {\rm H}^0(Q^{{\rm LF}(\xi)}, \sL_m^{ab})$ such that
$(Q^{{\rm LF}(\xi)})_{\sigma\neq 0}$ is affine and $\sigma(\pi(q))\neq 0$. Then $\pi^*(\sigma)\in {\rm H}^0(T, \mathbf{L}^a)$ is ${\rm GL}(P(N))$-invariant  such that
$\pi^*(\sigma)(q)\neq 0$ and $T_{\pi^*(\sigma)\neq 0}$ is affine. Let $\tau\in {\rm H}^0(R(\sO_{X_S},\xi, P), \sL^a)$ be the section such that $\phi^*(\tau)=\pi^*\sigma$. Then $\tau(\phi(q))\neq 0$ and $R(\sO_{X_S},\xi, P)_{\tau\neq 0}=T_{\pi^*(\sigma)\neq 0}//{\rm GL}(P(N))$ is affine. On the other hand, $\pi^*(\sigma)$ is ${\rm GL}(r)$-invariant since $\pi:T\to Q^{{\rm LF}(\xi)}$ is a principal ${\rm GL}(r)$-bundle, which implies that $\tau$ is ${\rm GL}(r)$-invariant and $\phi(q)$ is semistable under the action of ${\rm GL}(r)$ respect to $\sL$. Let
$$\psi: R(\sO_{X_S},\xi, P)\to \mathbf{M}:=R(\sO_{X_S},\xi, P)//{\rm GL}(r).$$
Then both $T\xrightarrow{\phi}R(\sO_{X_S},\xi, P)\xrightarrow{\psi} \mathbf{M}$ and
$$T\xrightarrow{\pi}Q^{{\rm LF}(\xi)}\xrightarrow{\varphi}M^{{\rm LF}(\xi)}(\sO_{X_S},P)$$
are categorical quotients of $T$ by ${\rm GL}(P(N))\times {\rm GL}(r)$. Thus $\mathbf{M}$ is naturally equal to
$M^{{\rm LF}(\xi)}(\sO_{X_S},P)$ and we the commutative diagram
$$\CD
  T @>\phi>> {R(\sO_{X_S},\xi, P)} \\
  @V \pi VV @V \psi VV  \\
  Q^{{\rm LF}(\xi)} @>\varphi>> M^{{\rm LF}(\xi)}(\sO_{X_S},P).
\endCD $$

To prove (2) of the theorem, let $$q=(\sO_{X_{\bar s}}(-N)^{\oplus P(N)}\xrightarrow{q} V\to 0, \beta)\in T$$ such that
$\phi(q)=\alpha=(V,\beta)\in  R(\sO_{X_S},\xi, P)$ and
$$q':=\pi(q)=(\sO_{X_{\bar s}}(-N)^{\oplus P(N)}\xrightarrow{q} V\to 0)\in Q^{{\rm LF}(\xi)}.$$
Let $O_{{\rm GL}(r)}(\alpha)\subset R(\sO_{X_S},\xi, P)$ (resp. $O_{{\rm GL}(P(N))}(q')\subset Q^{{\rm LF}(\xi)}$) be the orbit of $\alpha$ (resp. $q'$)
under ${\rm GL}(r)$ (resp. ${\rm GL}(P(N))$). Then
$$\phi^{-1}(O_{{\rm GL}(r)}(\alpha))=O_{{\rm GL}(P(N))\times{\rm GL}(r)}(q)=\pi^{-1}(O_{{\rm GL}(P(N))}(q'))$$
since $T\xrightarrow{\phi}R(\sO_{X_S},\xi, P)$ (resp. $T\xrightarrow{\pi}Q^{{\rm LF}(\xi)}$) is a principal ${\rm GL}(P(N))$-bundle (resp. a principal ${\rm GL}(r)$-bundle),
where $$O_{{\rm GL}(P(N))\times{\rm GL}(r)}(q)\subset T$$ is the orbit of $q\in T$ under ${\rm GL}(P(N))\times{\rm GL}(r)$. Thus
$$O_{{\rm GL}(r)}(\alpha)\subset R(\sO_{X_S},\xi, P)$$
is closed if and only if $O_{{\rm GL}(P(N))}(q')\subset Q^{{\rm LF}(\xi)}$ is closed. But
$$O_{{\rm GL}(P(N))}(q')\subset Q^{{\rm LF}(\xi)}$$ is closed if and only if $V$ is a direct sum of $p$-stable sheaves. On the other hand, the group of automorphisms of
determinant one of such a direct sum is finite if and only if the sum has exactly one $p$-stable component. Thus $\alpha=(V,\beta)\in R(\sO_{X_S},\xi, P)$ is a properly
stable point if and only if $V$ is a $p$-stable sheaf.
\end{proof}

\begin{rmks}\label{rmk2.4} (1) According to Simpson, the moduli spaces $$R(\sO_{\sX},\xi, P)\to S$$ are called \textbf{Representation spaces}.
In his Theorem 4.10 of \cite{Si}, Simpson proved in fact that $R(\sO_{X_S},\xi, P)$ is a fine moduli space and represents the functor
in the theorem, which we do not pursue since we do not need it for this paper (but it might be also true in the case of characteristic $p>0$).

(2) Let $X$ be a smooth, connected projective variety over a perfect field $k$ of characteristic $p>0$ with a given point $$\xi:{\rm Spec}(k)\to X,$$
and $X_S\to S$ be a smooth projective model of $X\to {\rm Spec}(k)$ such that $\xi:{\rm Spec}(k)\to X$ extends to a section $\xi_S:S\to X_S$. Then the representation spaces
$$R(\sO_{X_S},\xi_S, P)\to S,$$
we will use in this article, are the cases when $P(m)=\chi(\sO_X(m)^{\oplus r})$.

\end{rmks}

\begin{prop}\label{prop2.5} There exists a rational map $$f: R(\sO_{X_S},\xi_S, P)\dashrightarrow R(\sO_{X_S},\xi_S, P)$$ over
$S$ satisfying the following conditions:\begin{itemize} \item [(1)] $\forall\,\alpha=(E,\beta)\in R(\sO_X,\xi, P)=R(\sO_{X_S},\xi_S, P)\times_S{\rm Spec}(k)$, $f$ is
well-defined at $\alpha$ if and only if $F^*_XE$ is $p$-semistable, where $F_X:X\to X$ is the (absolute) Frobenius map. In this case,
$$f(\alpha)=(F^*_XE, F^*_k\beta).$$
\item [(2)] $\forall$ geometric closed point $\alpha_{\bar s}=(\sE_{\bar s},\beta_{\bar s})\in R(\sO_{X_S},\xi_S, P)$, $f$ is well-defined at $\alpha_{\bar s}$ if and only if $F^*_{X_{\bar s}}\sE_{\bar s}$ is $p$-semistable, where $X_{\bar s}$ is a geometric closed fiber of $X_S\to S$. In this case,
$$f(\alpha_{\bar s})=(F^*_{X_{\bar s}}\sE_{\bar s}, F^*_{k(\bar s)}\beta_{\bar s}).$$
\end{itemize}
\end{prop}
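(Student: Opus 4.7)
The plan is to define $f$ by applying Frobenius pullback to the universal pair on the frame bundle $T$ of Theorem~\ref{thm2.3}, and then descending the resulting morphism through the principal ${\rm GL}(P(N))$-bundle $\phi\colon T \to R(\sO_{X_S},\xi_S,P)$.

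Concretely, I would start with the universal triple $(\alpha^{univ}, \beta^{univ}, \sE^{univ})$ on $X_T := X_S \times_S T$. Writing $F_{X_T/T}\colon X_T \to X_T^{(p/T)}$ for the relative Frobenius over $T$ and $(\sE^{univ})^{(p)}$ for the base change of $\sE^{univ}$ along the absolute Frobenius $F_T\colon T \to T$, set $\sE' := F_{X_T/T}^{*}(\sE^{univ})^{(p)}$. Since $X_S \to S$ is smooth, the relative Frobenius is fiberwise finite flat, so $\sE'$ is $T$-flat and locally free on every fiber of $X_T\to T$. Because the Hilbert polynomial $P(m) = \chi(\sO_X(m)^{\oplus r})$ corresponds to rank $r$ with trivial Chern classes (Remark~\ref{rmk2.4}(2)), Frobenius preserves the Chern character and therefore preserves $P$. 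Applying the same Frobenius pullback to $\beta^{univ}$ and composing with the canonical identification $F^{*}\sO_T^{\oplus r} \cong \sO_T^{\oplus r}$ yields a frame $\beta'\colon \xi_T^{*} \sE' \cong \sO_T^{\oplus r}$.

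Next, $p$-semistability and condition ${\rm LF}(\xi)$ are both open conditions in flat projective families over $T$, so the locus $T^{\circ} \subset T$ on which $\sE'_t$ is $p$-semistable with ${\rm LF}(\xi)$ is open. On $T^{\circ}$ the pair $(\sE', \beta')$ is a family of the type corepresented by the representation space, hence by Theorem~\ref{thm2.3} induces a morphism $g \colon T^{\circ} \to R(\sO_{X_S},\xi_S,P)$. Because the Frobenius pullback construction depends only on the isomorphism class of $(\sE^{univ}, \beta^{univ})$ and not on the surjection $\alpha^{univ}$, the morphism $g$ is ${\rm GL}(P(N))$-invariant. Since $\phi$ is a principal ${\rm GL}(P(N))$-bundle and $T^{\circ}$ is ${\rm GL}(P(N))$-invariant, $g$ descends uniquely to a morphism $f \colon R^{\circ} \to R(\sO_{X_S},\xi_S,P)$ from the open subset $R^{\circ} := \phi(T^{\circ}) \subset R(\sO_{X_S},\xi_S,P)$. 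The formulas (1) and (2) then read off directly, because on $S'$-points the construction sends $(\sE, \alpha, \beta)$ to $(F^{*}\sE, F^{*}\alpha, F^{*}\beta)$.

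The main technical obstacle is the careful bookkeeping of absolute versus relative Frobenius needed to guarantee that $\sE'$ is $T$-flat with unchanged Hilbert polynomial; this rests on smoothness of $X_S \to S$ together with the Chern-class shape of $P$ fixed in Remark~\ref{rmk2.4}(2). Once this is in hand, the ${\rm GL}(P(N))$-invariance of $g$ and the resulting descent through $\phi$ are formal, and no further conceptual obstacle should arise.
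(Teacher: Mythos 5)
Your proposal follows essentially the same route as the paper: pull back the universal pair $(\sE^{univ},\beta^{univ})$ on $X_T$ by Frobenius, observe that this yields a ${\rm GL}(P(N))$-invariant morphism on an invariant open subset of $T$ where the pullback remains $p$-semistable, and descend through the principal bundle $\phi\colon T\to R(\sO_{X_S},\xi_S,P)$ to obtain the rational map, with (1) and (2) read off by specializing to fibers. Your added remarks on flatness, preservation of the Hilbert polynomial, and openness of semistability are correct elaborations of steps the paper leaves implicit (the only cosmetic slip is writing $F^*\alpha$, which is neither needed nor meaningful since the target $R$ classifies pairs $(\sE,\beta)$ only).
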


\begin{proof} Let $(\sE^{univ},\beta^{univ})$ be the universal object on $X_T:=X_S\times_ST$ where $\beta^{univ}:\xi_T^*\sE^{univ}\cong \sO_T^{\oplus r}$
is the universal frame of $\xi_T^*\sE^{univ}$. Let $$F:X_T\to X_T'$$ denote the relative
Frobenius morphism over $T$. Consider
\ga{2.3}{\xymatrix{\ar@/^20pt/[rr]^{F_{X_T}} X_T\ar[r]^F \ar[dr]_{p_T} & X_T'\ar[r]\ar[d]^{p'_T}
& X_T\ar[d]^{p_T}\\
 & T\ar[r]^{F_T}& T.} }
Then the pullback $(F^*_{X_T}\sE^{univ}, F^*_{T}\beta^{univ})$ of $(\sE^{univ},\beta^{univ})$, where
$$F^*_{T}\beta^{univ}: \xi_T^*(F^*_{X_T}\sE^{univ})=F_T^*(\xi_T^*\sE^{univ})\xrightarrow{F_T^*(\beta^{univ})} F^*_T( \sO_T^{\oplus r})=\sO_T^{\oplus r},$$
defines a ${\rm GL}(P(N))$-invariant morphism $\hat{f}: T_0\to R(\sO_{X_S},\xi_S, P)$ on an ${\rm GL}(P(N))$-invariant open set $T_0\subset T$, which induces the rational map
\ga{2.4} {f: R(\sO_{X_S},\xi_S, P)\dashrightarrow R(\sO_{X_S},\xi_S, P).}

To see that the rational map \eqref{2.4} satisfies the requirements (1) and (2) in the proposition, it is enough to make the following remak.
For any point $t\to T$, let $s\to S$ be its image under $T\to S$,
and $X_s$ be the fiber of $X_S\to S$ at $s\to S$. Then the diagram
\eqref{2.3} specializes to
$$\xymatrix{\ar@/^20pt/[rr]^{F_{X_s\times t}} X_s\times
t\ar[r]^{F_t} \ar[dr] & (X_s\times t)'\ar[r]\ar[d]^{}
& X_s\times t\ar[d]\\
 & t\ar[r]^{}& t.} $$

\end{proof}

\section{Stratified bundles and Representation spaces}

Let $k$ be a perfect field of characteristic $p>0$,
and $X$ a smooth connected projective variety over $k$. A stratified
bundle (or $\sD$-module) on $X$ is by definition a coherent $\sO_X$-module $\sE$ with
a homomorphism $$\nabla: \sD_X\to \sE nd_k(\sE)$$ of
$\sO_X$-algebras, where $\sD_X$ is the sheaf of differential
operators acting on the structure sheaf of $X$. By a theorem of Katz
(cf. \cite[Theorem 1.3]{Gi}), it is equivalent to the following
definition.
\begin{defn}\label{defn3.1}
A stratified bundle on $X$ is a sequence of bundles $$E=\{E_0, E_1,
E_2, \cdots, \sigma_0, \sigma_1,\ldots \}=\{E_i,\sigma_i\}_{i\in\Bbb
N}$$ where $\sigma_i:F_X^*E_{i+1}\to E_i$  is a $\sO_X$-linear
isomorphism, and  $F_X:X\to X$ is the absolute Frobenius.
\end{defn}
A morphism $\alpha=\{\alpha_i\}:\{E_i,\sigma_i\}\to \{F_i,\tau_i\}$
between two stratified bundles is a sequence of morphisms $\alpha_i:
E_i\to F_i$ of $\sO_X$-modules such that
$$\xymatrix{
   F_X^*E_{i+1} \ar[d]_{\sigma_i} \ar[r]^{ F_X^*\alpha_{i+1}}
                &  \ar[d]^{\tau_i}  F_X^*F_{i+1}\\
  E_i  \ar[r]^{\alpha_i}
                &             F_i }$$
is commutative. The category $\textbf{str}(X)$ of stratified bundles
is abelian, rigid, monoidal. We will drop the isomorphisms $\sigma_i$ and will use the notation $E=(E_i)_{i\in\Bbb N}$ and in particular
$E(n_0)=(E_i)_{i\ge n_0}$ is also a stratified bundle.

\begin{lem}\label{lem3.2} Let $E=(E_i)_{i\in\mathbb{N}}$ be a stratified bundle. Then there is an $n_0$ such that
$E_i$ are $p$-semistable of $p(E_i,m)=p(\sO_X,m)$ for all $i\ge n_0$.
\end{lem}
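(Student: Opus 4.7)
The plan is to establish, in succession, that $p(E_i, m) = p(\sO_X, m)$ holds for every $i \in \N$, and that $E_i$ is $p$-semistable for all $i$ beyond some $n_0$.

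For the first statement, exploit the interaction between Chern classes and the absolute Frobenius. Since $F_X^*L = L^{\otimes p}$ for any line bundle $L$, the splitting principle gives $c_j(F_X^* G) = p^j c_j(G)$ for every vector bundle $G$ and all $j \ge 0$. Applied iteratively to $E_i \cong F_X^* E_{i+1}$, this yields
\[
c_j(E_0)\cdot H^{d-j} \;=\; p^{ji}\cdot c_j(E_i)\cdot H^{d-j}, \qquad H = c_1(\sO_X(1)),\ d = \dim X.
\]
The intersection numbers $c_j(E_i)\cdot H^{d-j}$ are integers, so the fixed integer $c_j(E_0)\cdot H^{d-j}$ is divisible by every power of $p$ and must vanish. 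Hence $c_j(E_i)\cdot H^{d-j} = 0$ for every $i$ and $j$, and Hirzebruch--Riemann--Roch yields $P(E_i, m) = r\cdot P(\sO_X, m)$, i.e. $p(E_i, m) = p(\sO_X, m)$.

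For the second statement, I first handle $\mu$-semistability. Any saturated subsheaf $G \subset E_i$ pulls back to $F_X^* G \subset F_X^* E_i \cong E_{i-1}$ with $\mu(F_X^* G) = p\cdot\mu(G)$; taking the supremum over $G$ gives $\mu_{\max}(E_{i-1}) \ge p\cdot\mu_{\max}(E_i)$, hence $\mu_{\max}(E_i) \le p^{-i}\mu_{\max}(E_0)$. Since $\mu_{\max}(E_i)$ lies in a discrete subset of $\Q$ with denominators bounded by $r!$, it equals $0$ for $i$ large; combined with $\mu(E_i) = 0$ this gives $\mu$-semistability of $E_i$. To upgrade to Gieseker semistability, run the analogous divisibility-and-discreteness argument on the next Hilbert polynomial coefficient: if $G \subset E_i$ is saturated with $\mu(G) = 0$ and $p(G, m) > p(\sO_X, m)$, the leading nonzero coefficient of $p(G,m) - p(\sO_X, m)$ sits in degree $m^{d-k}$ for some $k \ge 2$ and is a combination of rational intersection numbers $\mathrm{ch}_j(G)\cdot H^{d-k}\cdot \mathrm{td}_{k-j}(X)$; under Frobenius pullback of $G$, the $\mathrm{ch}_j$ summand scales by $p^j$. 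Isolating the dominant ($j=k$) contribution and re-running the divisibility argument forces it to vanish for $i$ large, and iterating through the finitely many coefficients of the Hilbert polynomial produces the desired $n_0$.

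The main obstacle lies in this final upgrade: within a single defect coefficient the various $\mathrm{ch}_j$ contributions scale by different powers of $p$, so care is needed to argue that the dominant ($j=k$) term is not cancelled by the lower ones. A cleaner alternative is to invoke Langer's uniform bounds on Frobenius destabilization (Moduli spaces of sheaves in positive characteristic), which package all of the required discreteness statements at once and in particular imply that a stratified bundle is eventually strongly $\mu$-semistable, whence (given trivial numerical Chern classes) Gieseker semistable.
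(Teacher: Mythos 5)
Your first two steps are correct and are exactly the standard arguments underlying the results the paper cites: the $p$-divisibility argument gives numerical triviality of the Chern classes (for Hirzebruch--Riemann--Roch you need $\mathrm{ch}_j(E_i)\cdot\alpha=0$ against all complementary classes $H^{d-j-l}\cdot\mathrm{td}_l(X)$, not only against $H^{d-j}$, but the identical divisibility argument supplies this), and the estimate $\mu_{\max}(E_i)\le p^{-i}\mu_{\max}(E_0)$ together with discreteness of slopes gives $\mu$-semistability of $E_i$ for $i\gg 0$. Be aware, though, that the paper proves none of this itself: its proof of this lemma simply quotes Corollary 2.2 and Proposition 2.3 of \cite{EM} and then applies Lemma \ref{lem2.1} on extensions.

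The genuine gap is in your third step, the upgrade from $\mu$-semistability to Gieseker ($p$-)semistability, and you have correctly located it yourself. The obstruction you name --- that the coefficients of $p(G,m)-p(\sO_X,m)$ mix Chern characters $\mathrm{ch}_j(G)$ rescaling by different powers of $p$ --- cannot be removed by ``isolating the dominant term'': to isolate the $j=k$ contribution you would already need $\mathrm{ch}_j(G)\cdot H^{d-k}\cdot\mathrm{td}_{k-j}=0$ for $j<k$, and a destabilizing subsheaf $G\subset E_i$ carries no stratified structure, so the divisibility argument does not apply to it; nor need the maximal destabilizing subsheaf of $E_i$ be of the form $F_X^*G'$ with $G'\subset E_{i+1}$, so you cannot run a recursion on a single defect quantity. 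The fallback via Langer also does not close the gap as stated. First, the $E_i$ for $i\gg 0$ are not necessarily \emph{strongly} $\mu$-semistable: since $F_X^{k*}E_i\cong E_{i-k}$, strong semistability of $E_i$ requires semistability of $E_0$ and of all $F_X^{k*}E_0$, which the stratified structure does not control (your argument only yields that the first $i-n_1$ Frobenius pullbacks of $E_i$ are semistable). Second, even granting strong $\mu$-semistability and numerically trivial Chern classes, Gieseker semistability is not a formal consequence: Langer's Bogomolov inequality controls only $\mathrm{ch}_2(G)\cdot H^{d-2}$ of a slope-zero subsheaf, and there is no analogous inequality for $\mathrm{ch}_3$ and higher, which enter the lower coefficients of $p(G,m)$. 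What actually closes the argument in Proposition 2.3 of \cite{EM} --- the result the paper invokes --- is a Frobenius-descent argument for canonical destabilizing subobjects (a socle-type filtration), exhibiting $E(n_0)$ as a successive extension of \emph{stratified} bundles with $p$-stable terms; only because the graded pieces are again stratified does the divisibility argument apply to them, giving $p(U_j,m)=p(\sO_X,m)$, whence stability of the pieces and, by Lemma \ref{lem2.1}, semistability of $E_i$. This descent step is the missing idea in your proposal.
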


\begin{proof} It is known that $p(E_i,m)=p(\sO_X,m)$ for all $i\ge 0$ (see Corollary 2.2 of \cite{EM}). By Proposition 2.3 of \cite{EM}, there is an
$n_0>0$ such that the stratified bundle $E(n_0)=(E_i)_{i\ge n_0}$ is a successive extension of stratified bundles $U=(U_i)_{i\in\mathbb{N}}$
with the property that all $U_i$ for $i\in \mathbb{N}$ are $p$-stable bundles with $p(U_i,m)=p(\sO_X,m)$. Then, by Lemma \ref{lem2.1}, all $E_i$ for
$i\ge n_0$ are $p$-semistable of $p(E_i,m)=p(\sO_X,m)$.
\end{proof}

\begin{lem}\label{lem3.3} Let $X$ be a smooth projective variety over a perfect field $k$ of characteristic $p>0$ with a fixed rational point $\xi: {\rm Spec}(k)\to X$.
Let $F_X:X\to X$ be the Frobenius map, and $V$, $V'$ be vector bundles satisfying $V=F_X^*(V')$. Then, for any frame $\beta:\xi^*V\cong \sO_{{\rm Spec}(k)}^{\oplus r}$,
there is a unique frame $\beta':\xi^*V'\cong \sO_{{\rm Spec}(k)}^{\oplus r}$ such that $\beta=F_k^*(\beta')$ where $F_k: {\rm Spec}(k)\to {\rm Spec}(k)$ is the Frobenius morphism and $$F^*_k(\beta'): \xi^*V=F_k^*(\xi^*V')\xrightarrow{F_k^*(\beta')} F_k^*(\sO_{{\rm Spec}(k)}^{\oplus r})=\sO_{{\rm Spec}(k)}^{\oplus r}.$$
\end{lem}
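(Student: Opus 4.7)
The plan is to combine the naturality of the absolute Frobenius with the perfectness of $k$. Since $\xi:{\rm Spec}(k)\to X$ is a morphism of $\F_p$-schemes, naturality of the absolute Frobenius yields $F_X\circ\xi=\xi\circ F_k$. Pulling back $V'$ along this equality produces the canonical identification
$$\xi^*V=\xi^*F_X^*V'=F_k^*(\xi^*V'),$$
and together with the canonical identification $F_k^*\sO_{{\rm Spec}(k)}^{\oplus r}\cong \sO_{{\rm Spec}(k)}^{\oplus r}$ (induced by $b\otimes w\mapsto b\cdot w^{(p)}$, where $(p)$ denotes componentwise $p$-th power), this is what makes sense of the expression $F_k^*(\beta')$ appearing in the statement.

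For existence and uniqueness of $\beta'$ given $\beta$, the key observation is that because $k$ is perfect, $F_k:{\rm Spec}(k)\to{\rm Spec}(k)$ is an isomorphism of schemes, so $F_k^*$ is an auto-equivalence of the category of quasi-coherent $\sO_{{\rm Spec}(k)}$-modules (explicitly, $F_k^*=k\otimes_{k,F_k^{\#}}(-)$ is inverse to $k\otimes_{k,(F_k^{\#})^{-1}}(-)$). In particular, $F_k^*$ induces a bijection
$$F_k^*:{\rm Iso}(\xi^*V',\sO_{{\rm Spec}(k)}^{\oplus r})\xrightarrow{\sim}{\rm Iso}(F_k^*\xi^*V',F_k^*\sO_{{\rm Spec}(k)}^{\oplus r})={\rm Iso}(\xi^*V,\sO_{{\rm Spec}(k)}^{\oplus r}),$$
and the required $\beta'$ is the unique preimage of $\beta$ under this bijection.

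There is no genuine obstacle. Concretely, if we pick a $k$-basis $e'_1,\dots,e'_r$ of $\xi^*V'$, then $e'_1\otimes 1,\dots,e'_r\otimes 1$ is a $k$-basis of $F_k^*\xi^*V'=\xi^*V$, and the equation $F_k^*(\beta')=\beta$ unwinds to $\beta'(e'_i)^{(p)}=\beta(e'_i\otimes 1)\in k^r$ for each $i$; perfectness of $k$ produces the unique componentwise $p$-th root, so $\beta'$ exists and is unique. The one point worth flagging is that the perfectness hypothesis is indispensable, both in the conceptual argument (to make $F_k^*$ an equivalence) and in the explicit one (to extract $p$-th roots).
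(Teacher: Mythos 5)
Your proof is correct and takes essentially the same route as the paper's: both reduce to the fact that, since $k$ is perfect, the Frobenius twist $\xi^*V=F_k^*(\xi^*V')=\xi^*V'\otimes_{k^p}k$ can be uniquely untwisted, the paper doing this by producing the unique basis $\beta_1',\dots,\beta_r'$ of $\xi^*V'$ with $\beta_i=\beta_i'\otimes_{k^p}1$, which is exactly your explicit componentwise $p$-th-root computation. Your packaging of this as ``$F_k^*$ is an auto-equivalence, so it is bijective on frames'' is just a cleaner statement of the same argument.
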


\begin{proof} It is clearly a local question and we can assume $X={\rm Spec}(A)$, $\xi^*V=V\otimes_Ak=V/mV$, $\xi^*V'=V'\otimes_Ak=V'/mV'$ (where $k=A/m$, $m\subset A$ is a maximal ideal) and $\xi^*V=F^*_k\xi^*V'=\xi^*V'\otimes_{k^p}k$. The frame $\beta$ is uniquely determined by a base $\beta_1,\,\cdots,\,\beta_r \in \xi^*V$ of $\xi^*V$. Since $k$ is perfect, there are uniquely $\beta'_1,\,\cdots,\,\beta'_r\in \xi^*V'$ such that $\beta_i=\beta_i'\otimes_{k^p}1$ ($1\le i\le r$). Then $\beta'_1,\,\cdots,\,\beta'_r\in \xi^*V'$ are clearly linear independent (thus it is a base of $\xi^*V'$), which defines a frame $\beta':\xi^*V'\cong \sO_{{\rm Spec}(k)}^{\oplus r}$ satisfying
$F_k^*(\beta')=\beta$.
\end{proof}

For a given stratified bundle $E=(E_i)_{i\in \mathbb{N}}$ on $X$, we can fix a frame $$\beta_1:\xi^*E_1\cong \sO_{{\rm Spec}(k)}^{\oplus r}.$$
By Lemma \ref{lem3.2} and Lemma \ref{lem3.3}, we get a set of $k$-points
\ga{3.1} {R(E)_{n_0}:=\{\alpha_i=(E_i,\beta_i)\}_{i\ge n_0}\subset R(\sO_{X},\xi, P)(k)}
with $F^*_X(\alpha_{i+1})=\alpha_i$. To produce a representation of $\pi_1=\pi_1^{\text{{\'e}t}}(X, a)$ from the stratified bundle $E$, the following two results are important.

\begin{lem}[Lange-Stuhler, \cite{LS}]\label{lem3.4}
Let $X$ be a smooth projective variety over $k$ of ${\rm
char}(k)=p>0$, $F_X: X\to X$ be the Frobenius map. If there is a
vector bundle $\mathcal{E}$ on $X$ and an integer $m>0$ such that
$$(F_X^m)^*\mathcal{E}\cong \mathcal{E}$$ then there exists a
geometrically connected {\'e}tale finite cover $$\sigma:Z\to X$$ such
that $\sigma^*\mathcal{E}\cong \mathcal{O}_Z^{\oplus {\rm
rk}(\mathcal{E})}$. This gives a representation
$$\pi_1^{\text{{\'e}t}}(X\otimes_k\bar k, a)\to GL(V)$$
whose associated bundle is $\mathcal{E}\otimes_k\bar k$ on
$X\otimes_k\bar k$.
\end{lem}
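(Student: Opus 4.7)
The plan is to follow the classical Lange-Stuhler construction: cut out a finite \'etale cover of $X$ inside the frame bundle of $\mathcal{E}$ as the locus where the tautological frame is compatible with the Frobenius structure given by the isomorphism $\phi : (F_X^m)^*\mathcal{E} \xrightarrow{\sim} \mathcal{E}$, and verify \'etaleness by a Lang-Steinberg-type Jacobian computation.

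First I would form the frame bundle $\pi : Y := \underline{\mathrm{Isom}}_X(\sO_X^{\oplus r}, \mathcal{E}) \to X$, a $GL_r$-torsor over $X$ representable by a smooth affine $X$-scheme. Using the canonical identification $(F_X^m)^*\sO_X^{\oplus r} = \sO_X^{\oplus r}$ together with $\phi$, define an $X$-morphism $\Psi : Y \to Y$ on functorial points by
\[ \Psi(\beta) := \phi \circ (F_X^m)^*\beta, \]
and let $\sigma : Z \hookrightarrow Y$ be the closed subscheme of $\Psi$-fixed points. By construction the restriction to $Z$ of the universal frame on $Y$ produces an isomorphism $\sigma^*\mathcal{E} \cong \sO_Z^{\oplus r}$, which is the trivialization required in the statement.

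Next I would verify that $\sigma : Z \to X$ is \'etale and finite. Over any \'etale cover $U \to X$ trivializing $Y$, the morphism $\Psi$ takes the explicit form $\beta \mapsto A \cdot \beta^{(p^m)}$, where $A \in GL_r(\sO_U)$ is the matrix of $\phi$ in the chosen trivialization and $\beta^{(p^m)}$ denotes the entrywise $p^m$-th power. The Jacobian of the system $\beta - A\beta^{(p^m)} = 0$ with respect to the variables $\beta$ equals the identity, because $d(x^{p^m}) = 0$ in characteristic $p$; by the Jacobian criterion $Z \times_X U \to U$ is \'etale, and by descent so is $\sigma$. Each geometric fibre is a torsor under $GL_r(\mb F_{p^m})$ by the Lang-Steinberg theorem applied to $GL_r$ with Frobenius $g \mapsto g^{(p^m)}$; in particular $\sigma$ is surjective, finite, and in fact an \'etale $GL_r(\mb F_{p^m})$-torsor.

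Finally, I would pass to $X \otimes_k \bar k$ and pick a geometrically connected component $Z_0$ of $Z \otimes_k \bar k$; replacing $Z$ by $Z_0$ and $GL_r(\mb F_{p^m})$ by the stabilizer of $Z_0$ yields the sought-after geometrically connected \'etale finite cover trivializing $\mathcal{E}$. The Galois action of $\pi_1^{\text{\'et}}(X \otimes_k \bar k, a)$ on the $\bar k$-fibre then lands in $GL_r(\mb F_{p^m}) \subset GL(V)$, giving the promised representation, whose associated bundle recovers $\mathcal{E} \otimes_k \bar k$ via the usual equivalence between finite \'etale torsors with a linear action and vector bundles trivializable after the corresponding cover. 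The main obstacle is the \'etaleness step, which rests on the Lang-Steinberg Jacobian computation and uses characteristic $p$ essentially through the identity $d(x^{p^m}) = 0$; ensuring geometric connectedness of the resulting cover is a separate but routine point.
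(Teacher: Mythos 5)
The paper offers no proof of this lemma: it is quoted from Lange--Stuhler \cite{LS} and used as a black box, so there is nothing internal to compare against. Your argument is the classical Lange--Stuhler construction and is essentially correct: the frame bundle, the fixed locus of $\beta\mapsto\phi\circ(F_X^m)^*\beta$ (locally $B\mapsto A\,B^{(q)}$ with $q=p^m$), the vanishing of the Jacobian of $B-AB^{(q)}$ because $d(x^{q})=0$, and the passage to a connected component over $\bar k$ are exactly the steps of the original proof. One step should be tightened: surjectivity of $Z\to X$ cannot be obtained by literally quoting the Lang--Steinberg theorem, since the residue field $\Omega$ of a geometric point of $X$ is in general not $\overline{\mathbb{F}}_q$ and $g\mapsto g^{(q)}$ is only $\mathbb{F}_q$-semilinear, not an endomorphism of the algebraic group $GL_{r,\Omega}$, so the hypotheses of Lang--Steinberg are not met. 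The standard replacement is elementary semilinear algebra: the equations $b_{ij}=\sum_k a_{ik}b_{kj}^{q}$ cut out a finite \'etale $\Omega$-algebra inside all of $M_r$ (rewrite them as $b_{kj}^{q}=\sum_i(A^{-1})_{ki}b_{ij}$ to see finiteness), the solution set is an $\mathbb{F}_q$-vector space, and the fixed-space $\{v\in\Omega^r : v=Av^{(q)}\}$ has $\mathbb{F}_q$-dimension $r$ and spans $\Omega^r$ over $\Omega$ (this is Satz 1.4 of \cite{LS}, valid over any separably closed field of characteristic $p$); a basis of it is an invertible solution, and any two solutions differ by an element of $GL_r(\mathbb{F}_q)$, so each geometric fibre of $Z\to X$ is a nonempty $GL_r(\mathbb{F}_q)$-torsor. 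With that substitution your proof is complete; the remaining points (geometric connectedness via a component, the induced representation into the stabilizer subgroup of $GL_r(\mathbb{F}_q)$, and the recovery of $\mathcal{E}\otimes_k\bar k$ as the associated bundle) are routine as you indicate.
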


To find Frobenius periodic bundles from a given stratified bundle, the key tool is a theorem of
Hrushovski. In fact, we only need a special case of his theorem
\cite[Corollary~1.2]{H} (see also \cite{Va} for a proof in algebraic geometry).

\begin{thm}[Corollary of twisted Lang-Weil estimate, {\cite[Corollary~1.2]{H}}]\label{thm3.5}
Let $Y$ be an affine variety over $\mathbb{F}_q$, and let
$$\Gamma\subset (Y\times_{\F_q} Y)\otimes_{\F_q} \bar \F_q$$ be an
irreducible subvariety over $\overline{\mathbb{F}}_q$. Assume the
two projections $\Gamma\to Y$ are dominant. Then, for any closed
subvariety $W\varsubsetneq Y$, there exists $x\in
Y(\overline{\mathbb{F}}_q)$ such that $(x, x^{q^m})\in \Gamma$ and
$x\notin W$ for large enough natural number $m$.
\end{thm}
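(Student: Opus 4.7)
This is a corollary of the twisted Lang--Weil estimate for correspondences. My plan is to count the $\bar{\F}_q$-points of
\[
N_m := \{\,x \in Y(\bar{\F}_q) : (x, x^{q^m}) \in \Gamma\,\}
\]
directly and to show separately that the analogous count with the extra constraint $x \in W$ is of strictly smaller order; subtracting produces the desired $x$ for all $m \gg 0$. After replacing $q$ by a power I may assume $\Gamma$ is defined over $\F_q$; set $d = \dim Y$, and interpret $N_m$ as the $\bar{\F}_q$-points of the scheme-theoretic intersection of $\Gamma$ with the graph of $\mathrm{Frob}_q^m$ inside $Y \times Y$.

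The central estimate to establish is the asymptotic
\[
|N_m| = q^{md} + O\bigl(q^{m(d - \tfrac12)}\bigr) \qquad \text{as } m \to \infty,
\]
which I would derive following Varshavsky's $\ell$-adic/geometric approach: compactify $\Gamma$, apply the Lefschetz trace formula for $\mathrm{Frob}_q^m$ acting on the compactly supported $\ell$-adic cohomology of the relevant intersection variety, and invoke Deligne's weight bound to control the lower-degree cohomology. Geometric irreducibility of $\Gamma$ together with dominance of both projections is precisely what ensures that the top-degree cohomology contributes with multiplicity one, producing the main term $q^{md}$.

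For the exclusion of $W$: dominance of $p_1 : \Gamma \to Y$ combined with $W \subsetneq Y$ forces $\Gamma' := \Gamma \cap (W \times Y)$ to be a proper closed subvariety of $\Gamma$, hence of dimension strictly less than $d$. Applying the same point-count estimate to each irreducible component of $\Gamma'$ yields
\[
\#\{\,x \in W(\bar{\F}_q) : (x, x^{q^m}) \in \Gamma\,\} = O\bigl(q^{m(d-1)}\bigr),
\]
of strictly smaller order than $q^{md}$. Consequently $N_m \setminus W$ is non-empty for all sufficiently large $m$.

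The main obstacle is the twisted Lang--Weil estimate itself: classical Lang--Weil controls fixed points of a single Frobenius on a geometrically irreducible variety, while here one must control the intersection of an arbitrary correspondence with the graph of an iterate of Frobenius on an affine (hence non-proper) $Y$. Making the multiplicity-one contribution at top cohomology rigorous, and bounding boundary contributions in any chosen compactification, is the technical core of the argument; Hrushovski's original model-theoretic route via the stability theory of difference-closed fields packages the same content in different language, but either route is what genuinely does the work.
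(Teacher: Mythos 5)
The paper offers no proof of this statement: it is quoted verbatim as Corollary~1.2 of Hrushovski \cite{H}, with \cite{Va} cited for an algebro-geometric proof, and is used as a black box. So there is no internal argument to compare yours against. Judged on its own terms, your writeup correctly identifies the route (Varshavsky's $\ell$-adic proof), but it is not a proof: the displayed asymptotic for $|N_m|$ \emph{is} the twisted Lang--Weil estimate, i.e.\ essentially the entire content of the theorem, and you explicitly defer it as ``the technical core.'' Compactify, apply the trace formula, invoke Deligne's weight bounds is the right slogan, but none of the genuine difficulties are engaged: one must show that the intersection of $\Gamma$ with the graph of $\mathrm{Frob}^m$ has the expected dimension for $m\gg 0$, control the contributions from the boundary of the compactification and from excess or embedded components, and identify the top-weight contribution. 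None of that is routine, and it is precisely where Hrushovski's and Varshavsky's work lies.

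Two more concrete problems with the sketch as written. First, the main term $q^{md}$ with $d=\dim Y$ silently assumes $\dim\Gamma=\dim Y$ (equivalently, that the projections are generically finite); the hypothesis only gives $\dim\Gamma\ge\dim Y$, and when the inequality is strict the intersection with the Frobenius graph is positive-dimensional, so the count as you state it is not even well posed --- one must first cut $\Gamma$ down by hypersurface sections or argue via the image. Moreover the leading constant is in general not $1$ but involves the (inseparable) degrees of the two projections; for the existence statement only its positivity matters, but ``multiplicity one in top cohomology'' is not justified by irreducibility plus dominance alone. Second, replacing $q$ by a power to make $\Gamma$ defined over the base field changes the meaning of $x^{q^m}$; this is harmless because $m$ is allowed to be large, but it should be said. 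In short, your proposal restates the theorem rather than proving it; for the purposes of this paper the correct move is exactly what the author does, namely to cite \cite{H} or \cite{Va} and prove only the derived statement (Lemma~\ref{lem3.6}).
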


However, we will use Hrushovski's theorem in following formulation.

\begin{lem}[Corollary of Hrushovski's theorem]\label{lem3.6}
Let $Y$ be a variety over $\bar{\mathbb{F}}_p$ and $f:
Y\dashrightarrow Y$ a dominant rational map. Then the subset
$\{\,x\in Y(\bar{\mathbb{F}}_p)\,\,|\,\,\exists\,
b,\,\,f^b(x)=x\,\}\subset Y(\bar{\mathbb{F}}_p)$ is dense in $Y$.
\end{lem}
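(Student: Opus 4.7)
The plan is to reduce to the case where $Y$ is affine and irreducible, and then to apply Hrushovski's Theorem~\ref{thm3.5} to the Zariski closure of the graph of $f$. First, since $f$ is dominant it induces a permutation $\sigma$ on the set of irreducible components of $Y$; after replacing $f$ by an iterate $f^\ell$ with $\sigma^\ell=\mathrm{id}$, each component is preserved, and periodic points of $f^\ell$ are periodic for $f$. We may thus assume $Y$ is irreducible. To establish density, fix a nonempty open $V\subset Y$ and choose an affine open $U\subset V$ contained in the domain of $f$; this is possible because this domain is open and dense. Since $f$ is dominant, its image in $Y$ is dense, so $U\cap f^{-1}(U)$ is a nonempty, hence dense, open subset of $U$, and $f|_U:U\dashrightarrow U$ is a dominant rational self-map of the affine variety $U$. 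As everything is of finite type and geometrically integral, for some power $q$ of $p$ these data descend to a geometrically integral $U_0$ and $f_0:U_0\dashrightarrow U_0$ defined over $\F_q$.

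Let $\Gamma_0\subset U_0\times_{\F_q}U_0$ be the Zariski closure of the graph of $f_0$ on $U_0\cap f_0^{-1}(U_0)$, and set $\Gamma=\Gamma_0\otimes_{\F_q}\overline{\F}_q\subset U\times U$. Then $\Gamma$ is irreducible (as the closure of an irreducible set, using geometric integrality), and both projections $\Gamma\to U$ are dominant: the first is an isomorphism over the open $U\cap f^{-1}(U)$, and the second has image containing the dense set $f(U\cap f^{-1}(U))$. Now apply Theorem~\ref{thm3.5} with the proper closed subvariety $W:=U_0\setminus f_0^{-1}(U_0)\subsetneq U_0$: for sufficiently large $m$ there exists $x\in U(\overline{\F}_q)\setminus W$ with $(x,x^{q^m})\in\Gamma$. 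Since $\Gamma$ agrees with the graph of $f$ over $U\cap f^{-1}(U)$ (the graph of a morphism to the separated scheme $U$ being closed in $(U\cap f^{-1}(U))\times U$), and $x$ lies in $U\cap f^{-1}(U)$, this containment forces $f(x)=x^{q^m}=F^m(x)$, where $F$ is the $q$-Frobenius on $U\otimes\overline{\F}_q$.

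Finally, because $f_0$ is defined over $\F_q$, the Frobenius $F$ commutes with $f$, so by iteration $f^k(x)=F^{km}(x)$ for every $k\ge 1$. Since $x\in U(\overline{\F}_q)$ has finite Galois orbit over $\F_q$, there is $n$ with $F^{mn}(x)=x$, and hence $f^n(x)=x$, producing a periodic point of $f$ inside the arbitrary open set $V\supset U$. The main technical point is to arrange both projections of $\Gamma$ to be dominant and to ensure that the chosen $x$ actually lies in the domain of $f$, so that the set-theoretic containment $(x,x^{q^m})\in\Gamma$ supplied by Theorem~\ref{thm3.5} translates into the equality $f(x)=x^{q^m}$; the choice of $W$ above takes care of both points.
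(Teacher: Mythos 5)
Your proof is correct and follows essentially the same route as the paper: restrict to an affine open set, apply Theorem~\ref{thm3.5} to the Zariski closure of the graph of $f$ with $W$ the locus where the self-map is not defined, deduce $f(x)=x^{q^m}$, and then use that $f$ commutes with Frobenius together with the finiteness of the Galois orbit of $x$ to get periodicity. You are somewhat more careful than the paper about the reduction to the irreducible case, the dominance of both projections of $\Gamma$, and why $(x,x^{q^m})\in\Gamma$ forces $f(x)=x^{q^m}$, but these are refinements of the same argument, not a different one.
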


\begin{proof} We prove that any nontrivial affine open set of $Y$ contains a periodic point of $f$.
Replace $Y$ by the affine open set, we can assume that $Y\subset \mathbb{A}^n_{\mathbb{F}_q}$ is an affine variety over $\mathbb{F}_q$ and
$f:Y\dashrightarrow Y$ is also defined over $\mathbb{F}_q$. Let
$\Gamma=\overline{graph(f)}\subset Y\times Y$ and $W\subset Y$ where $f$ is not well-defined. Let $f=(f_1,...,f_n)$ be defined by the rational functions $ f_i\in
\mathbb{F}_q(Y)$ on $Y$. Then, by Theorem \ref{thm3.5}, there is a point $ x=(x_1,...,x_n)\in Y(\overline{\mathbb{F}}_q)$ such
that $x\notin W$ and $(x,x^{q^m})\in\Gamma$ where $$x^{q^m}:=(x_1^{q^m},x_2^{q^m},...,x_n^{q^m}),$$
which implies that $f(x)=(x_1^{q^m},...,x_n^{q^m})=x^{q^m}$. On the other hand, since $f$ is defined by rational functions $f_i\in\mathbb{F}_q(Y)$, we have
$$f(x^{q^a})=(f_1(x^{q^a}),...,f_n(x^{q^a}))=(f_1(x)^{q^a},...,f_n(x)^{q^a}):=f(x)^{q^a}$$
for any integer $a>0$. Thus $$f^b(x)=f^{b-1}(x^{q^m})=f^{b-2}(x^{q^{2m}})=\cdots=x^{q^{bm}}=x$$
when $b$ is large enough since $ x=(x_1,...,x_n)\in Y(\overline{\mathbb{F}}_q)$.
\end{proof}

\begin{thm}\label{thm3.7} Let $X$ be a smooth projective variety over a perfect field $k$ of characteristic $p>0$ with a fixed point
$\xi:{\rm Spec}(k)\to X.$
Let $E=(E_i)_{i\in \mathbb{N}}$ be a stratified bundle on $X$ such that $$\Sigma(E)=\{\,E_i\,\}_{i\in \mathbb{N}}$$
is an infinite set. Then there exist a choice of $S$ and an irreducible closed subset $\sN(E)_S\subset R(\sO_{X_S},\xi_S, P)$ such that
\begin{itemize}
\item [(1)] $\{\,V\in\Sigma(E)\,|\,(V,\beta)\in\sN(E)_S(k)\}\subset\Sigma(E)$  is an infinite subset, where
$\sN(E)_S(k)$ denote the set of $k$-points of $\sN(E)_S$.
\item [(2)] $\sN(E)_S$ contains a dense subset of points $\alpha_{\bar s}=(V_{\bar s},\beta_{\bar s})$, where $V_{\bar s}$ are vector bundles on geometric fibers $X_{\bar s}$ associated to representations
$\rho_{\bar s}: \pi_1^{\text{{\'e}t}}(X_{\bar s},\xi_{\bar s})\to GL(r, k(\bar s))$.
\end{itemize}
\end{thm}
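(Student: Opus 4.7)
The plan is to encode $\Sigma(E)$ as an infinite set of $k$-points of the representation space, take its Zariski closure over a finitely generated base, pick an irreducible component preserved by a power of the Frobenius rational map, and extract the required representation-induced points via Hrushovski and Lange--Stuhler. To set up, by Lemma \ref{lem3.2} I first fix $n_0$ so that every $E_i$ with $i\geq n_0$ is $p$-semistable with Hilbert polynomial $P$, choose an arbitrary frame $\beta_{n_0}$ of $\xi^*E_{n_0}$ and apply Lemma \ref{lem3.3} inductively to get unique frames $\beta_i$ ($i\geq n_0$) satisfying $F_k^*(\beta_{i+1})=\beta_i$ under $F_X^*E_{i+1}\cong E_i$. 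This produces the set $R(E)_{n_0}=\{\alpha_i=(E_i,\beta_i)\}_{i\geq n_0}$ of $k$-points of $R(\sO_X,\xi,P)$ satisfying $f(\alpha_{i+1})=\alpha_i$ by Proposition \ref{prop2.5}. Next I choose a smooth affine $\mathbb{F}_q$-variety $S$, a smooth projective model $X_S\to S$ of $X\to\Spec k$ and a section $\xi_S$ extending $\xi$, arranged so that $\Spec k\to S$ factors through the generic point of $S$; each $\alpha_i$ then becomes a $k$-point of $R(\sO_{X_S},\xi_S,P)$ lying above the generic point of $S$.

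For the construction of $\sN(E)_S$, let $Z$ be the Zariski closure of $\{\alpha_i\}_{i\geq n_0}$ in $R(\sO_{X_S},\xi_S,P)$. Since $(E_i,\beta_i)\not\cong(E_j,\beta_j)$ whenever $E_i\not\cong E_j$ and $\Sigma(E)$ has infinitely many distinct isomorphism classes, pigeonhole on the finitely many irreducible components of $Z$ produces an irreducible component $\sN(E)_S$ containing infinitely many $\alpha_i$ that represent pairwise non-isomorphic $E_i$'s; this gives claim (1). The relation $f(\alpha_{i+1})=\alpha_i$ shows $f$ maps the dense subset $\{\alpha_{i+1}\}_{i\geq n_0}$ of $Z$ onto the dense subset $\{\alpha_i\}_{i\geq n_0}$, so $f$ restricts to a dominant rational self-map $Z\dashrightarrow Z$ that permutes the finitely many irreducible components of $Z$; replacing $f$ by a suitable power $f^N$ we obtain a dominant rational self-map $f^N:\sN(E)_S\dashrightarrow\sN(E)_S$. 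Moreover, because the $\alpha_i$ lie above the generic point of $S$, the morphism $\sN(E)_S\to S$ is dominant.

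For claim (2), Proposition \ref{prop2.5} shows that $f$ covers the absolute Frobenius $F_S:S\to S$, which fixes every closed point of $S$; after raising the power further, we may assume $f^N$ restricts to a genuine $k(s)$-rational self-map of the fiber $\sN(E)_{S,s}$ at every closed point $s\in S$, and on a dense open $U\subset S$ this restriction remains dominant on each geometric fiber $\sN(E)_{S,\bar s}$ over $\bar s\in U(\overline{\mathbb{F}}_q)$. For each such $\bar s$ Lemma \ref{lem3.6} provides a dense subset of $f$-periodic points $(V_{\bar s},\beta_{\bar s})\in\sN(E)_{S,\bar s}(\overline{\mathbb{F}}_q)$, and any such point satisfies $(F_{X_{\bar s}}^*)^m V_{\bar s}\cong V_{\bar s}$ for some $m\geq 1$, so Lange--Stuhler (Lemma \ref{lem3.4}) yields a representation $\rho_{\bar s}:\pi_1^{\text{{\'e}t}}(X_{\bar s},\xi_{\bar s})\to{\rm GL}(r,k(\bar s))$ with associated bundle $V_{\bar s}$. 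Since closed points of $U$ are dense in $S$ and $\sN(E)_S\to S$ is dominant, the union of these fiberwise dense sets is dense in $\sN(E)_S$, yielding (2).

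The main obstacle will be the last step: carefully controlling the permutation of irreducible components of $Z$ under $f$, verifying that the dominance of $f^N$ on $\sN(E)_S$ descends to dominance of a suitable power of $f$ on a dense family of geometric fibers, and showing that the union of the Hrushovski-dense subsets on these fibers is genuinely dense in $\sN(E)_S$.
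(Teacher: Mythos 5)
Your overall strategy (encode the $E_i$ with compatible frames as points of $R(\sO_{X_S},\xi_S,P)$, take the Zariski closure, extract an irreducible component with a dominant self-map given by a power of $f$, then invoke Hrushovski and Lange--Stuhler) is the same as the paper's, and your treatment of part (1) is essentially right. One small but real gap there: you assert that $\{\alpha_{i+1}\}_{i\ge n_0}$ is dense in $Z=\overline{\{\alpha_i\}_{i\ge n_0}}$, but shifting the index can shrink the closure (e.g.\ if $\alpha_{n_0}$ is isolated in $Z$). The paper handles this by introducing the decreasing chain $\sZ_n=\overline{R(E)_n}$, using Noetherianity to find $n_0$ with $\sZ_n=\sZ_{n_0}$ for $n\ge n_0$, and working with $\sZ=\bigcap_n\sZ_n$; only then is the image of the shifted dense set again dense, so that $F_X^*$ is dominant on $\sZ$. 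You need this stabilization step before the component-permutation argument.

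The serious gap is in part (2), where you depart from the paper by trying to apply Lemma \ref{lem3.6} fiber by fiber over closed points of $S$. Three things go wrong. First, your claim that ``after raising the power further, $f^N$ restricts to a genuine self-map of the fiber at every closed point $s$'' cannot hold for a single $N$: the residue degrees $[k(s):\mathbb{F}_q]$ are unbounded, so no fixed power of $F_S$ fixes all closed points with their residue fields; $N$ would have to depend on $s$. Second, even granting a dominant self-map of the total space, dominance of its restriction to the geometric fibers $\sN(E)_{S,\bar s}$ is not automatic and is only asserted, not proved; it requires passing to the generic fiber and a spreading-out argument. Third, and most importantly, the fibers $\sN(E)_{S,\bar s}$ of an irreducible $\sN(E)_S$ need not be irreducible, whereas Lemma \ref{lem3.6} (via Theorem \ref{thm3.5}, which needs the correspondence $\Gamma=\overline{graph(f)}$ irreducible with both projections dominant) is stated and proved for an irreducible variety; so the fiberwise invocation of Hrushovski is not justified as written. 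All of this is unnecessary: the paper simply regards $\sN(E)_S$ itself as an irreducible variety over $\mathbb{F}_q$ (after shrinking $S$ so that $f^a:\sN(E)_S\dashrightarrow\sN(E)_S$ is a dominant rational map defined over $\mathbb{F}_q$) and applies Lemma \ref{lem3.6} once to the total space; the resulting dense set of periodic points automatically consists of points $(V_{\bar s},\beta_{\bar s})$ in various geometric fibers with $(F^*_{X_{\bar s}})^{am}V_{\bar s}\cong V_{\bar s}$, to which Lemma \ref{lem3.4} applies. You should replace your fiberwise argument by this direct global application.
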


\begin{proof} For any $k$-scheme $T$ and a $T$-flat family of $p$-semistable bundles $\sE$ on $X\times T$ of Hilbert polynomial $P(m)=\chi(\sO_X(m)^{\oplus r})$ with frame
$$\beta:\xi_T^*\sE\cong\sO_T^{\oplus r},$$
the Frobenius pullback $(F^*_{X\times T}\sE, F^*_T\beta)$ induces a rational map
$$T\dashrightarrow  R(\sO_X,\xi,P).$$
This way gives a rational map $F^*_X: R(\sO_X,\xi,P)\dashrightarrow R(\sO_X,\xi,P)$ .

For the given stratified bundle $E=(E_i)_{i\in \mathbb{N}}$, without loss of generality, we assume that all bundles $E_i$ ($\in \mathbb{N}$) are $p$-semistable. Fix a frame
$\beta_1: \xi^*E_1\cong \sO_{{\rm Spec}(k)}^{\oplus r}$, by Lemma \ref{lem3.3}, the stratified bundle $E=(E_i)_{i\in \mathbb{N}}$ gives an infinite set of points
$$R(E)=\{\, Q_i=(E_i,\beta_i)\in R(\sO_X,\xi,P)\,|\,F^*_X(Q_{i+1})=Q_i\,\}$$
and subsets $R(E)_n:=\{\,Q_i\in R(E)\,\}_{i\ge n}$ ($n\in \mathbb{N}$), which satisfy
\ga{3.2}{R(E)=R(E)_1\supseteq R(E)_2\supseteq\cdots \supseteq R(E)_n\supseteq R(E)_{n+1}\supseteq\cdots}
and $F^*_X(R(E)_{n+1})=R(E)_{n}$. Let
$\sZ_n=\overline{R(E)_n}\subset R(\sO_{X},\xi,P)$ be the Zariski closure of $R(E)_n$. Then, by \eqref{3.2}, we have
$$\sZ_1\supseteq\sZ_2\supseteq\cdots\supseteq\sZ_n\supseteq\sZ_{n+1}\supseteq\cdots$$
which implies that there is $n_0>0$ such that $\sZ_n=\sZ_{n_0}$ ($n\ge n_0$). Let
$$\sZ=\bigcap_{i=1}^{\infty}\sZ_i\subset R(\sO_{X},\xi,P).$$
Then the rational map
$F_X^*: R(\sO_{X},\xi,P)\dashrightarrow R(\sO_{X},\xi,P)$
induce a dominant rational map $F_X^*:\sZ\dashrightarrow\sZ$. Thus there is an irreducible component $\sN(E)\subset\sZ\subset R(\sO_{X},\xi,P)$
such that \begin{itemize}
\item  $\{\,V\in\Sigma(E)\,|\,(V,\beta)\in\sN(E)(k)\}\subset\Sigma(E)$ is an infinite subset;
\item  there is an integer $a>0$ such that $(F_X^*)^a:\sZ\dashrightarrow\sZ$ induces
a dominant rational map $(F_X^*)^a:\sN(E)\dashrightarrow\sN(E).$\end{itemize}

Choose a smooth, geometrically irreducible affine variety $S$ over a finite field $\mathbb{F}_q$ with rational function field $\mathbb{F}_q(S)\subset k$ such that $\sN(E)$, $(F_X^*)^a$
are defined over $S$ and
there exist a smooth model $X_S\to S$ of $X\to {\rm Spec}(k)$ and a section $\xi_S:S\to X_S$ extending $\xi\in X(k)$.
Let $$R(\sO_{X_S},\xi_S,P)\to S$$ be the representation space constructed in Theorem \ref{thm2.3}. Then
$$R(\sO_X,\xi,P)=R(\sO_{X_S},\xi_S,P)\times_S{\rm Spec}(k)$$
and the subvariety $\sN(E)\subset R(\sO_{X_S},\xi_S,P)\times_S{\rm Spec}(k)$ is defined over $S$ by the choice of $S$.
Thus there exists a closed subvariety
\ga{3.3}{\sN(E)_S\subset R(\sO_{X_S},\xi_S,P)}
such that $\sN(E)=\sN(E)_S\times_S{\rm Spec}(k)$, which implies that
$$\sN(E)_S(k)=\sN(E)(k).$$
This proves (1) of the theorem.

To show statement (2) of the theorem, recall the rational map
$$f: R(\sO_{X_S},\xi_S,P)\dashrightarrow R(\sO_{X_S},\xi_S,P)$$
constructed in Proposition \ref{prop2.5}, in which we proved that $F_X^*=f\otimes k$ is induced by
$f: R(\sO_{X_S},\xi_S,P)\dashrightarrow R(\sO_{X_S},\xi_S,P)$ under the base change
$$R(\sO_{X_S},\xi_S,P)\times_S{\rm Spec}(k)\xrightarrow{f\otimes k}R(\sO_{X_S},\xi_S,P)\times_S{\rm Spec}(k).$$
Then $f^a\otimes k:\sN(E)_S\times_S{\rm Spec}(k)\dashrightarrow\sN(E)_S\times_S{\rm Spec}(k)$ is a dominant rational map,
which implies (by shrinking $S$ if necessary) that $$f^a:\sN(E)_S\dashrightarrow\sN(E)_S$$ is a dominant rational map over $\mathbb{F}_q$. By Lemma \ref{lem3.6}, the subset
$$\Gamma=\{\,\alpha_{\bar s}\in \sN(E)_S(\bar{\mathbb{F}}_q)\,\,|\,\,\exists\,
m,\,\,f^m(\alpha_{\bar s})=\alpha_{\bar s}\,\}$$ of $f^a$-periodic points is dense in $\sN(E)_S$. By Lemma \ref{lem3.4}, if a point $\alpha_{\bar s}=(V_{\bar s},\beta_{\bar s})\in\Gamma$, then $V_{\bar s}$ is a vector bundle on a geometric fiber $X_{\bar s}$, which is associated to a representation
$\rho_{\bar s}: \pi_1^{\text{{\'e}t}}(X_{\bar s},\xi_{\bar s})\to GL(r, k(\bar s))$.

\end{proof}

\section{An application of Representation spaces}

In this section, we present an application of our Theorem \ref{thm3.7} by giving a proof of relative version of Gieseker's problem.
To warm up, we prove firstly the main theorem of \cite{EM} via representation spaces.

\begin{thm}[Esnault-Mehta, {\cite[Theorem~3.15]{EM}}]\label{thm4.1}
Let $X$ be a smooth connected projective variety defined over a perfect field $k$ of characteristic $p>0$ with a $k$-rational point
$\xi\in X(k)$. If $\pi_1^{\text{{\'e}t}}(X_{\bar k},\xi)=\{1\}$, there is no nontrivial stratified bundle on $X$.
\end{thm}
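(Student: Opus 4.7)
The plan is to suppose for contradiction that $E=(E_i)_{i\in\mathbb{N}}$ is a nontrivial stratified bundle on $X$ and to manufacture from it a nontrivial finite \'etale cover of $X_{\bar k}$, contradicting the hypothesis. First I would reduce to the case that every $E_i$ is nontrivial as a bundle: since $F_X^*\sO_X^{\oplus r}=\sO_X^{\oplus r}$, the set of $i$ with $E_i\cong\sO_X^{\oplus r}$ is downward closed in $\mathbb{N}$, so either it equals $\mathbb{N}$ (and then $E$ is already trivial as a stratified bundle) or a shift of indices kills it entirely.

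I would then split according to whether $\Sigma(E)=\{E_i\}_{i\geq n_0}$ is finite or infinite. In the finite case, pigeonhole yields $i_0$ and $m>0$ with $(F_X^m)^*E_{i_0}\cong E_{i_0}$, and Lemma \ref{lem3.4} (Lange--Stuhler) supplies a representation $\rho:\pi_1^{\text{{\'e}t}}(X_{\bar k},\xi)\to {\rm GL}(V)$ whose associated bundle is $E_{i_0}\otimes_k\bar k$. Under the hypothesis $\rho$ is trivial, so $E_{i_0}\otimes_k\bar k\cong\sO_{X_{\bar k}}^{\oplus r}$; Hilbert 90 applied to ${\rm GL}_r(\bar k)={\rm Aut}(\sO_{X_{\bar k}}^{\oplus r})$ (using that $X_{\bar k}$ is geometrically connected projective so $H^0(X_{\bar k},\sO)=\bar k$) descends this to $E_{i_0}\cong\sO_X^{\oplus r}$, contradicting the reduction.

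The infinite case is where Theorem \ref{thm3.7} does the real work: it supplies a model $X_S\to S$ over some $\mathbb{F}_q$, a section $\xi_S$, and a closed irreducible subvariety $\sN(E)_S\subset R(\sO_{X_S},\xi_S,P)$ whose $k$-points contain the infinite set $\{(E_i,\beta_i)\}$ and in which the closed points $(V_{\bar s},\beta_{\bar s})$ with $V_{\bar s}$ coming from a representation $\rho_{\bar s}:\pi_1^{\text{{\'e}t}}(X_{\bar s},\xi_{\bar s})\to{\rm GL}(r,k(\bar s))$ are Zariski dense. By Theorem \ref{thm2.3}(2) the locus of trivial underlying bundle is a single closed ${\rm GL}(r)$-orbit (the stabilizer of the trivial frame is all of ${\rm GL}(r)$), so its complement $\sU\subset\sN(E)_S$ is open, and the reduction forces every $(E_i,\beta_i)\in\sU$, making $\sU$ nonempty. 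Density then produces a representation point $(V_{\bar s},\beta_{\bar s})\in\sU$, so $\rho_{\bar s}$ is a \emph{nontrivial} representation of $\pi_1^{\text{{\'e}t}}(X_{\bar s},\xi_{\bar s})$.

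The main obstacle is transporting this nontrivial $\rho_{\bar s}$ on the geometric closed fiber $X_{\bar s}$ back to the generic geometric fiber $X_{\bar k}$. I would realize $\rho_{\bar s}$ by a connected finite \'etale Galois cover $Z_{\bar s}\to X_{\bar s}$ (again by Lange--Stuhler), spread this cover to a connected finite \'etale Galois cover $Z_T\to X_T$ over some \'etale neighborhood $T$ of $\bar s$ in $S$, and base change through the inclusion $\mathbb{F}_q(S)\subset k$ furnished by Theorem \ref{thm3.7} to obtain a finite \'etale cover of $X_{\bar k}$. The hypothesis $\pi_1^{\text{{\'e}t}}(X_{\bar k},\xi)=1$ forces every such cover to split as a disjoint union of copies of $X_{\bar k}$; tracing this splitting back along the family should force $Z_{\bar s}$ itself to split, contradicting the nontriviality of $\rho_{\bar s}$. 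The delicate point is that the number of geometric components of a finite \'etale cover can jump in a smooth proper family when the Galois group has $p$-order, so one must track components carefully along the specialization from $\bar k$ to $\bar s$.
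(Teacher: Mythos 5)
Your overall architecture coincides with the paper's: the same reduction to the case where every $E_i$ is nontrivial, the same finite/infinite dichotomy, Lange--Stuhler (Lemma \ref{lem3.4}) in the finite case, and in the infinite case the same use of Theorem \ref{thm3.7} together with Theorem \ref{thm2.3}(2) to produce a representation point $(V_{\bar s},\beta_{\bar s})$ with $V_{\bar s}$ nontrivial. (Your Hilbert~90 detour in the finite case is correct but unnecessary: since $E_{i_0}$ is nontrivial, so is $E_{i_0}\otimes_k\bar k$, hence the Lange--Stuhler cover trivializing it cannot have degree one, which already contradicts $\pi_1^{\text{{\'e}t}}(X_{\bar k},\xi)=\{1\}$.)

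The genuine gap is in your final step. Everything you sketch there --- lifting the connected Galois cover $Z_{\bar s}\to X_{\bar s}$ to a neighbourhood of $\bar s$ and asking whether it remains connected over the generic geometric fibre --- is precisely the construction, and then the surjectivity, of the specialization homomorphism $\pi_1^{\text{{\'e}t}}(X_{\bar k},\xi)\cong\pi_1^{\text{{\'e}t}}(X_{\bar\eta},\xi_{\bar\eta})\to\pi_1^{\text{{\'e}t}}(X_{\bar s},\xi_{\bar s})$ for the proper smooth family $X_S\to S$ (\cite[Expos\'e~X,~Th\'eor\`eme~3.8]{SGA1}). The paper closes the argument by citing this surjectivity: it gives $\pi_1^{\text{{\'e}t}}(X_{\bar s},\xi_{\bar s})=\{1\}$, hence $\rho_{\bar s}$ trivial, hence $V_{\bar s}$ trivial, contradicting $(V_{\bar s},\beta_{\bar s})\in\sU$. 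You instead leave this step as a ``delicate point,'' and the difficulty you flag is the wrong one: wild $p$-power phenomena affect the \emph{injectivity} of the specialization map (extra covers can appear on the generic fibre, e.g.\ degenerating Artin--Schreier covers), not its surjectivity. For a proper family, a connected finite \'etale cover of the special fibre extends uniquely over the henselization and restricts to a \emph{connected} cover of the generic fibre --- that is exactly the statement you need and exactly what SGA1 X.3.8 provides. As written, the crucial transport from $X_{\bar s}$ back to $X_{\bar k}$ is not established, and the route you propose for it amounts to reproving that theorem by hand while leaving its key point unresolved; replacing that paragraph by the citation completes the proof and recovers the paper's argument.
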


\begin{proof} We prove it by contradiction. If there is a nontrivial stratified bundle $E=(E_i)_{i\in \mathbb{N}}$ on $X$, without loss of generality, we assume
that all $E_i\in\Sigma(E)=\{E_i\}_{i\in \mathbb{N}}$ are nontrivial bundles. If $\Sigma(E)$ is a finite set, there is an $E_{i_0}\in\Sigma(E)$ such that
$(F^*_X)^aE_{i_0}=E_{i_0}$ for some integer $a>0$. By Lemma \ref{lem3.4}, there is a nontrivial geometrically connected {\'e}tale finite cover $\sigma:Z\to X$ such
that $\sigma^*E_{i_0}\cong \mathcal{O}_Z^{\oplus r}$, which is a contradiction with $\pi_1^{\text{{\'e}t}}(X_{\bar k},\xi)=\{1\}$.

If $E_i\in\Sigma(E)=\{E_i\}_{i\in \mathbb{N}}$ is an infinite set, let $\sN(E)_S\subset R(\sO_{X_S},\xi_S, P)$ be the closed subset constructed in Theorem \ref{thm3.7} and
$$\mathbf{B}=\{\,(V,\beta\,)\in \sN(E)_S\,|\, \text{$V$ is trivial}\,\}\subset R(\sO_{X_S},\xi_S, P).$$
By (2) of Theorem \ref{thm2.3}, $\mathbf{B}$ is a closed subset. By the assumption that $E=(E_i)_{i\in \mathbb{N}}$ is a nontrivial stratified bundle, the open set
$$U=\sN(E)_S\setminus\mathbf{B}$$ is non-empty. Then, by (2) of Theorem \ref{thm3.7}, there is a point $$\alpha_{\bar s}=(V_{\bar s},\beta_{\bar s})\in U$$
such that $V_{\bar s}$ is a vector bundle on a geometric fiber $X_{\bar s}$ associated to a representation
$\rho_{\bar s}: \pi_1^{\text{{\'e}t}}(X_{\bar s},\xi_{\bar s})\to GL(r, k(\bar s))$, which must be nontrivial by definition of $U$. This is a contradiction with
$\pi_1^{\text{{\'e}t}}(X_{\bar k},\xi)=\{1\}$ since the specialization
homomorphism $\pi_1^{\text{{\'e}t}}(X_{\bar k},\xi)\to \pi_1^{\text{{\'e}t}}(X_{\bar s},\xi_{\bar s})$ is surjective (\cite[Expos\'e~X,~Th\'eor\`eme~3.8]{SGA1}).
\end{proof}

\begin{thm}\label{thm4.2} Let $f:Y\to X$ be a morphism of smooth projective
varieties over a perfect field $k$ of characteristic $p>0$, $\xi'\in Y(k)$ and $\xi\in X(k)$ be $k$-points such that $f(\xi')=\xi$.
If the homomorphism
$$f_*: \pi_1^{{\rm \acute{e}t}}(Y_{\bar k},\xi')\to
\pi_1^{{\rm \acute{e}t}}(X_{\bar k},\xi)$$ is trivial, then for any stratified bundle $E$ on $X$, $f^*E$ is trivial.
\end{thm}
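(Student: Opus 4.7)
My plan is to imitate the proof of Theorem \ref{thm4.1} using both representation spaces $R_X:=R(\sO_{X_S},\xi_S,P_X)$ and $R_Y:=R(\sO_{Y_S},\xi'_S,P_Y)$ (for a common affine model $S$ over a finite field, after extending $f,\xi,\xi'$ to $f_S:Y_S\to X_S,\,\xi_S,\,\xi'_S$), linked by a pullback rational map built exactly as the Frobenius pullback in Proposition \ref{prop2.5}:
\[
f^{*}:R_X\dashrightarrow R_Y,\qquad (V,\beta)\mapsto(f^{*}V,\,f^{*}\beta),
\]
well defined at $(V,\beta)$ whenever $f^{*}V$ is $p$-semistable of Hilbert polynomial $P_Y$ (the frame transports since $f_S\circ\xi'_S=\xi_S$).

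Argue by contradiction, assuming $f^{*}E$ is nontrivial on $Y$. Since $f^{*}E_i=F_Y^{*}f^{*}E_{i+1}$ and $F_Y^{*}$ sends trivial bundles to trivial bundles, the set $\{i:f^{*}E_i\text{ is trivial}\}$ is downward closed; so after shifting indices (which does not change $f^{*}E$ as a stratified bundle) we may assume every $f^{*}E_i$ is nontrivial. If $\Sigma(E)=\{E_i\}$ is finite, some $E_{i_0}$ is $F_X$-periodic, and Lemma \ref{lem3.4} gives a representation $\rho$ of $\pi_1^{\et}(X_{\bar k},\xi)$ with associated bundle $E_{i_0}\otimes\bar k$; then $f^{*}E_{i_0}$ is associated to $\rho\circ f_{*,\bar k}=1$, hence trivial on $Y_{\bar k}$, and trivial on $Y$ by Hilbert 90 (as $k$ is perfect), a contradiction. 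Otherwise $\Sigma(E)$ is infinite and Theorem \ref{thm3.7}, applied to $E$ on $X$, produces an irreducible closed $\sN(E)_S\subset R_X$ containing infinitely many $(E_i,\beta_i)$ as $k$-points, together with a dense subset $\Delta\subset\sN(E)_S(\bar{\F}_q)$ of points $(V_{\bar s},\beta_{\bar s})$ with $V_{\bar s}$ associated to some $\rho_{\bar s}:\pi_1^{\et}(X_{\bar s},\xi_{\bar s})\to GL(r,k(\bar s))$.

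The key input is a specialization argument. The commutative diagram
\[
\xymatrix{
\pi_1^{\et}(Y_{\bar k},\xi')\ar@{->>}[d]\ar[r]^{f_{*,\bar k}}&\pi_1^{\et}(X_{\bar k},\xi)\ar@{->>}[d]\\
\pi_1^{\et}(Y_{\bar s},\xi'_{\bar s})\ar[r]^{f_{*,\bar s}}&\pi_1^{\et}(X_{\bar s},\xi_{\bar s})
}
\]
has surjective specialization verticals (\cite[Exp.~X,~Th\'eor\`eme~3.8]{SGA1}), so $f_{*,\bar k}=1$ forces $f_{*,\bar s}=1$. Therefore, for each $(V_{\bar s},\beta_{\bar s})\in\Delta$, the pullback $f^{*}V_{\bar s}$ is associated to the trivial representation $\rho_{\bar s}\circ f_{*,\bar s}$, hence is trivial — in particular $p$-semistable — so $f^{*}$ is defined there and carries the point into the closed subvariety $\mathbf{B}_Y\subset R_Y$ of pairs with trivial underlying bundle (closed by Theorem \ref{thm2.3}(2), since $\sO_Y$ is $p$-stable). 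Let $U\subset R_X$ denote the open domain of $f^{*}$. Then $\Delta\subset U\cap\sN(E)_S$, so $U\cap\sN(E)_S$ is dense in the irreducible $\sN(E)_S$, and the closed set $(f^{*})^{-1}(\mathbf{B}_Y)\cap(U\cap\sN(E)_S)$ contains $\Delta$ and hence equals $U\cap\sN(E)_S$. Applying Lemma \ref{lem3.2} to $f^{*}E$ on $Y$ shows $f^{*}E_i$ is $p$-semistable for $i\gg 0$; thus infinitely many $(E_i,\beta_i)$ lie in $U\cap\sN(E)_S$, and for all such $i$ the bundle $f^{*}E_i$ is trivial — contradicting our reduction.

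The main technical obstacle is making $f^{*}$ behave correctly on $\sN(E)_S$: members of a stratified bundle have zero Chern classes, so their pullbacks have Hilbert polynomial $P_Y$; the rep-dense $\Delta$ automatically lies in the open domain $U$ because its image under $f^{*}$ consists of trivial, hence semistable, bundles; and one shrinks $S$ as needed to accommodate all rationality and specialization requirements. These are routine extensions of the absolute constructions of Sections~2 and~3.
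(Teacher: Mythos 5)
Your proposal is correct and follows essentially the same route as the paper: the identical finite-case reduction via Lemma \ref{lem3.4}, and in the infinite case the same pullback rational map $f_S^*:R(\sO_{X_S},\xi_S,P)\dashrightarrow R(\sO_{Y_S},\xi'_S,P')$ applied to $\sN(E)_S$ from Theorem \ref{thm3.7}, with triviality being a closed condition by Theorem \ref{thm2.3}(2). The only (harmless) difference is how the contradiction is extracted: the paper views $f_S^*$ as a map into the open locus $\sU$ of nontrivial bundles and notes it cannot be defined at a dense representation point, whereas you use density of $\Delta$ plus closedness of $\mathbf{B}_Y$ to force all of $U\cap\sN(E)_S$ — including infinitely many $(E_i,\beta_i)$ — to pull back to trivial bundles; your write-up is in fact slightly more explicit about the specialization diagram that the paper leaves implicit.
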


\begin{proof} We prove the theorem by contradiction. If there is a stratified bundle $E=(E_i)_{i\in \mathbb{N}}$ on $X$ such that $f^*E$ is nontrivial, without loss of generality, we assume
that all $f^*E_i\in\Sigma(f^*E)=\{f^*E_i\}_{i\in \mathbb{N}}$ are nontrivial bundles.

If $\Sigma(E)=\{E_i\}_{i\in \mathbb{N}}$ is a finite set, there is an $E_{i_0}\in\Sigma(E)$ such that for any $j> i_0$ there is a $1\le j_0\le i_0$ such that $E_{j}=E_{j_0}$ that implies
$$(F^*_X)^{j-j_0}E_j=E_{j_0}=E_j.$$
Thus $E_j$ is induced by a representation of $\pi_1^{{\rm \acute{e}t}}(X_{\bar k},\xi)$ by Lemma \ref{lem3.4}. Then $f^*E_{j}$ is trivial, which implies that all $f^*E_i$ are trivial, a contradiction with our assumption.

If $\Sigma(E)=\{E_i\}_{i\in \mathbb{N}}$ is an infinite set, without loss of generality, we assume
that all $f^*E_i\in\Sigma(f^*E)=\{f^*E_i\}_{i\in \mathbb{N}}$ are $p$-semistable bundles on $Y$ of Hilbert polynomial $P'$.
Let $\sN(E)_S\subset R(\sO_{X_S},\xi_S, P)$ be the closed subset constructed in Theorem \ref{thm3.7} and
$$f_S^*: R(\sO_{X_S},\xi_S, P)\dashrightarrow R(\sO_{Y_S},\xi'_S, P')$$
be the rational map that sends a point $\alpha_{\bar s}=(V_{\bar s},\beta_{\bar s})\in R(\sO_{X_S},\xi_S, P)$ to a point
$f^*_S(\alpha_{\bar s})=(f^*_{\bar s}(V_{\bar s}), f^*_{\bar s}(\beta_{\bar s}))\in R(\sO_{Y_S},\xi'_S, P')$ when $f^*_{\bar s}(V_{\bar s})$ is $p$-semistable on $Y_{\bar s}$,
where $f_S: Y_S\to X_S$ is a model of $f:Y\to X$ and $f_{\bar s}=f_S\otimes k(\bar s): Y_{\bar s}=Y_S\otimes k(\bar s) \to X_S\otimes k(\bar s)= X_{\bar s}$ is the induced morphism on geometric fibers. Consider the open set
$$\mathbf{\sU}=\{\,(V,\beta\,)\in R(\sO_{Y_S},\xi'_S, P')\,|\, \text{$V$ is not trivial}\,\}\subset R(\sO_{Y_S},\xi'_S, P'),$$
which is open by (2) of Theorem \ref{thm2.3}, we have a rational map
\ga{4.1} {f_S^*: \sN(E)_S\dashrightarrow \mathbf{\sU}.}
Let $W\subset\sN(E)_S$ be the open set where the rational map \eqref{4.1} is well defined. Then $W$ contains the infinite set
$$\{\,V\in\Sigma(E)\,|\,(V,\beta)\in\sN(E)_S(k)\}\subset\Sigma(E)$$ defined in (1) of Theorem \ref{thm3.7} since all $f^*E_i$ ($\forall\,E_i\in\Sigma(E)$) are nontrivial $p$-semistable bundles on $Y$. But, by (2) of Theorem \ref{thm3.7}, $W$ contains a point $\alpha_{\bar s}=(V_{\bar s},\beta_{\bar s})$
where $V_{\bar s}$ is a vector bundle on a geometric fiber $X_{\bar s}$ associated to a representation
$\rho_{\bar s}: \pi_1^{\text{{\'e}t}}(X_{\bar s},\xi_{\bar s})\to GL(r, k(\bar s))$. Then $f^*_{\bar s}(V_{\bar s})$ must be trivial by the condition of the
theorem. Thus the rational map \eqref{4.1} is not well-defined at $\alpha_{\bar s}=(V_{\bar s},\beta_{\bar s})\in W$, which is a contradiction.
\end{proof}

\bibliographystyle{plain}

\renewcommand\refname{References}

\end{document}